\theoremstyle{plain}
\newtheorem{thm}[subsection]{Theorem}
\newtheorem{lem}[subsection]{Lemma}
\newtheorem{prop}[subsection]{Proposition}
\newtheorem{cor}[subsection]{Corollary}
\newtheorem{claim}[subsection]{Claim}
\theoremstyle{definition}
\newtheorem{rk}[subsection]{Remark}
\newtheorem{definition}[subsection]{Definition}
\newcommand{\bb}{\mathbb}
\newcommand{\ov}{\overline}
\newcommand{\p}{\partial}
\newcommand{\Lin}{\text{\rm Lin}}
\begin{document}
	\date{}
		
	\title[On tangential deformations of homogeneous polynomials]{On tangential deformations of homogeneous polynomials}
		
	\author[ZHENJIAN WANG]{ ZHENJIAN WANG  }
	\address{Univ. Nice Sophia Antipolis, CNRS,  LJAD, UMR 7351, 06100 Nice, France.}
	\email{wzhj01@gmail.com}
		
	\subjclass[2010]{Primary 14A25, Secondary 14C34, 14J70 }
		
	\keywords{tangential deformation, totally tangentially unstable, tangentially smoothability}
		
	\begin{abstract}
    The Jacobian ideal provides the set of infinitesimally trivial deformations for a homogeneous polynomial, or for the corresponding complex projective hypersurface. In this article, we investigate whether the associated linear deformation  is indeed trivial, and show that the answer is no in a general situation. We also give a characterization of tangentially smoothable hypersurfaces with isolated singularities. Our results have applications in the local study of variations of projective hypersurfaces, complementing the global versions given by J.~Carlson and P.~Griffiths, R.~Donagi and the author, and in the study of isotrivial linear systems on the projective space, showing that a general divisor does not belong to an isotrivial linear system of positive dimension.
	\end{abstract}
	\maketitle
    \tableofcontents
	
\section{Introduction}
Let $S_n=\bb{C}[x_0,\cdots, x_n]$ be the graded ring of polynomials in $n+1$ variables $x_0,\cdots, x_n$ with coefficients in $\bb{C}$, which is also the homogeneous coordinate ring of $\bb{P}^n$, the $n$-dimensional complex projective space. $S_n$ admits a natural grading with respect to degree
$$
S_n=\bigoplus_{d=0}^\infty S_{n,d}
$$
where $S_{n,d}$ is the vector space of homogeneous polynomials of degree $d$. And any element $f\in S_{n,d}$ defines in $\bb{P}^n$ a hypersurface $H_f: f=0$, which is a projective scheme whose closed points are the zeros of $f$. We call $f$ {\bf smooth} if $H_f$ is a smooth hypersurface, {\bf singular} if otherwise.

The general linear group $G=GL(n+1,\bb{C})$ acts on $S_{n,d}$ by coordinate transformations. Given any nonzero $f\in S_{n,d}$, let $G\cdot f$ be the orbit of $f$ in $S_{n,d}$ and  $\bb{P}(G\cdot f)$ its image in $\bb{P}(S_{n,d})$ under projectivization; in addition, let $J_f$, called the {\bf Jacobian ideal of $f$}, be the graded ideal of $S_n$ generated by the partial derivatives of $f$:
$$
J_f=\biggl(\frac{\partial f}{\partial x_0}, \frac{\partial f}{\partial x_1},\cdots,\frac{\partial f}{\partial x_n}\biggr).
$$

A well-known fact states that the tangent space at $f$ to the orbit $G\cdot f$ is given by $T_f(G\cdot f)=J_{f,d}$, the degree $d$ homogeneous component of $J_f$.
In addition, from the viewpoint of deformation theory, $J_{f,d}$ exactly consists of all {\it infinitesimally trivial deformations} of $f$, see \cite{VO2}, Lemma 6.15.
 Moreover, as is shown in \cite{MM}, when $n\geq 3, d\geq 3$, then for a general $f$, except the case $(n,d)=(3,4)$, $\text{Aut}(H_f)$ is trivial and thus $H_f$ and $H_g$ are isomorphic as projective schemes if and only if they are projectively equivalent, i.e., $g=G\cdot f$. All these facts directly motivate the following definitions.

 Given two homogeneous polynomials $f,g\in S_{n,d}$, we say that $g$ is {\bf equivalent} to $f$, denoted by $g\cong f$, if $g\in G\cdot f$.

\begin{definition}
    Let $n\geq 1, d\geq 1$ and $f, h\in S_{n,d}$.
    \begin{enumerate}[(i)]
     \item If $h\in J_{f,d}$, $f_t=f+th,\ t\in\bb{C}$ is said to be a {\bf tangential deformation} for $f$. If, in addition, $f_t\cong f$ for all sufficiently small $t$ (i.e. $|t|<\epsilon$ for some $\epsilon>0$), $h$ is called a {\bf tangentially trivial} deformation for $f$.
	\item $f$ is called {\bf totally tangentially unstable} if any tangentially trivial deformation for $f$ is a complex multiple of $f$.
    \end{enumerate}
\end{definition}

We shall prove the following.

\begin{thm}\label{thm: TTI}
For $n\geq 1$ and $d\geq 4$, a {\bf general} $f\in S_{n,d}$ is totally tangentially unstable.
\end{thm}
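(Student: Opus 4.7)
The plan is to reformulate tangential triviality as the existence of a formal group-valued curve realizing the deformation, analyze the resulting infinite tower of obstructions, and bound the tangentially trivial locus by a dimension count or explicit specialization.

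For general $f$ the stabilizer $\text{Stab}_G(f)$ is finite (which is implicit in the Matsumura--Monsky-type result on $\text{Aut}(H_f)$ cited in the introduction, and can be verified directly in the low-dimensional cases $n=1,2$), so the linear map $A \mapsto A_{*} f := \sum_i (Ax)_i \p_i f$ is an isomorphism $\mathfrak{gl}(n+1) \simeq J_{f,d}$. A tangentially trivial $h$ then corresponds to a formal curve $\phi(t) = I + t A_1 + t^2 A_2 + \cdots$ in $G$ satisfying $\phi(t) \cdot f = f + t h$ identically in $\bb{C}[[t]]$. Expanding $f(\phi(t)^{-1} x)$ by Taylor, the identity yields a recursion: order one gives $A_{1*} f = -h$, uniquely determining $A_1$; order two forces $Q(A_1, f) := (A_1 x)^T H_f(x) (A_1 x) \in J_{f,d}$ and then uniquely determines $A_2$; at each subsequent order one obtains an obstruction in $S_{n,d}/J_{f,d}$, and when it vanishes, $A_k$ is uniquely determined.

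The second-order obstruction descends via Euler's identity to a well-defined quadratic map $\overline{Q}_f : \mathfrak{gl}(n+1)/\bb{C}\!\cdot\! I \to S_{n,d}/J_{f,d}$, automatically vanishing on the scalar class; the scalar solutions correspond to the obvious tangentially trivial deformations $h \in \bb{C} f$, realized by the algebraic (but non one-parameter subgroup) curves $\phi(t) = (1+tc)^{-1/d}\, I$ for $c \in \bb{C}$. The crux of the proof is to show that, for general $f$, no non-scalar $A_1$ yields a solution to the full tower. I would set up the incidence variety
\[
  \C{T} := \{(f, A_1) \in S_{n,d} \times \mathfrak{gl}(n+1) : \text{the obstruction tower admits a formal solution starting with } A_1\}
\]
and aim to bound $\dim \C{T} \leq \dim S_{n,d} + 1$, so that $\C{T}(f) = \bb{C} \!\cdot\! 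I$ for general $f$, equivalently $h \in \bb{C} f$.

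I expect the main obstacle to be that the 2nd-order obstruction alone does not suffice in low-dimensional cases. For instance, when $n=1, d=4$, $\dim S_{n,d}/J_{f,d} = 1$, so $Q(A_1, f) \in J_{f,d}$ cuts out a 2-dimensional quadric in $\mathfrak{gl}(2)/\bb{C}\!\cdot\! I$, and higher-order obstructions must be combined. The cleanest route is likely a specialization argument: exhibit a specific $f_0$ (generic enough to avoid the symmetries that give non-scalar isotrivial lines, e.g.\ a carefully chosen perturbation of a Fermat-type polynomial), verify $\C{T}(f_0) = \bb{C}\!\cdot\! I$ by direct computation of enough obstructions to cut the zero locus down to the scalar line, and invoke upper semicontinuity of fiber dimensions of $\C{T} \to S_{n,d}$. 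Constructing and verifying such an $f_0$ uniformly in $(n, d)$ is the delicate part.
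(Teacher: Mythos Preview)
Your obstruction-theoretic framework is sound and genuinely different from the paper's argument. The paper proceeds in two independent steps: first, it shows that the locus $\bb{P}(\mathcal{U}_{n,d})$ of totally tangentially unstable forms is Zariski open, via a direct incidence-variety argument in $\bb{P}(S_{n,d})\times\bb{P}(S_{n,d})$ (not an obstruction tower); second, it shows nonemptiness by reducing to $n=1$ via generic hyperplane sections---if $h\in\mathcal{T}_f$ then $h|_L\in\mathcal{T}_{f|_L}$ for a general hyperplane $L$, and Vistoli's bound on reducible members of a pencil forces $h\in\bb{C}f$ once the binary case is known---and then handles binary forms by a short geometric analysis of pencils on $\bb{P}^1$. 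No explicit $f_0\in S_{n,d}$ is ever exhibited for $n\geq 2$; existence is inferred from $n=1$.

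Your proposal has two gaps. The minor one is the constructibility of $\C{T}$: as written, ``the obstruction tower admits a formal solution'' is an infinite conjunction of closed conditions, so $\C{T}$ is a priori only a countable intersection of Zariski closed sets, and upper semicontinuity of fiber dimension is not immediately available. This is repairable: once you find an $f_0$ at which finitely many obstruction levels already cut the fiber down to $\bb{C}\cdot I$, those finitely many closed conditions define a genuine closed set to which semicontinuity applies. The serious gap is the one you flag yourself: you have not produced any $f_0$, nor indicated how many obstruction levels are needed or how the verification would be carried out uniformly in $(n,d)$. This is not a detail---it is the entire content of the theorem, and the difficulty is real (for the Fermat form, or more generally any polynomial of Sebastiani--Thom type, the tower has nonscalar solutions, so ``a careful perturbation of Fermat'' is not automatically safe without further argument). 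The paper's reduction to binary forms is precisely the device that replaces this open-ended computation with a concrete geometric argument; absent such a reduction or an actual computation, your plan remains a heuristic rather than a proof.
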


For the deformation of a singular polynomial, we can also consider another property.

\begin{definition}\label{def: TS}
	Let $n\geq 2, d\geq 2$ and $f\in S_{n,d}$ be singular. $f$ is said to be {\bf tangentially smoothable} if there exists an $h\in J_{f,d}$ such that $f+h$ is smooth.
\end{definition}

We prove the following.

\begin{thm}\label{thm: TS}
	 Let $n\geq 2, d\geq 2$ and $f\in S_{n,d}$. If $f$ is singular and $H_f$ has only isolated singularities, then $f$ is tangentially smoothable if and only if every singular point of $H_f$ has multiplicity 2.
\end{thm}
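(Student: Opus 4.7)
The plan is to prove the two implications of the theorem separately.

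\emph{Forward direction (``some singular point of multiplicity $\geq 3$ $\Rightarrow$ not tangentially smoothable''):} The argument is local and direct. Pick a singular point $p$ of multiplicity at least $3$; I may arrange $p = (1:0:\cdots:0)$. Working in the affine chart $x_0 \neq 0$ with $\tilde f(x) = f(1, x_1, \ldots, x_n)$, Euler's identity lets me rewrite any $h = \sum_{i=0}^n a_i \p f/\p x_i \in J_{f,d}$ in the form
\[\tilde h = d\tilde a_0 \tilde f + \sum_{i=1}^n (\tilde a_i - \tilde a_0 x_i)\, \p \tilde f/\p x_i.\]
Since $\tilde f$ vanishes to order $\geq 3$ at the origin and each $\p\tilde f/\p x_i$ to order $\geq 2$, $\tilde h$ vanishes to order $\geq 2$, and so $\tilde f + \tilde h$ still has multiplicity $\geq 2$ at $p$; thus $H_{f+h}$ is singular at $p$ for every $h \in J_{f,d}$.

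\emph{Reverse direction (``all multiplicities $= 2$ $\Rightarrow$ tangentially smoothable''):} By Euler's identity, $f \in J_{f,d}$, hence $f + J_{f,d} = J_{f,d}$, and the problem reduces to producing a smooth member of the linear system $J_{f,d} \subset S_{n,d}$. I first identify the base locus: set-theoretically it equals $\mathrm{Sing}(H_f) = \{p_1,\ldots,p_k\}$ (the non-trivial inclusion uses that if $p \notin \mathrm{Sing}(H_f)$, some $\p f/\p x_i(p) \neq 0$, and multiplying it by a suitable linear form produces an element of $J_{f,d}$ not vanishing at $p$). Then I combine: (a) Bertini's theorem, giving that a Zariski-general $h \in J_{f,d}$ defines $H_h$ smooth away from the base locus; (b) for each $p_j$, a local check that the Zariski-open condition ``$H_h$ is smooth at $p_j$'' is non-empty. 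Since finite intersections of non-empty Zariski opens in the irreducible affine space $J_{f,d}$ remain non-empty, (a) and (b) together yield a smooth element in $J_{f,d}$.

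The main obstacle is step (b), since $J_{f,d}$ is a constrained deformation space and one must verify it still provides local smoothing freedom. Using the same rewriting of $\tilde h$ in the affine chart at $p_j$, I first check $\tilde h(0) = 0$ (so $p_j \in H_h$), then compute the linear part of $\tilde h$ at the origin. The multiplicity $2$ hypothesis says $\tilde f = Q + O(|x|^3)$ with $Q$ a nonzero quadratic form, so $\p \tilde f/\p x_i$ has linear part $\p Q/\p x_i$, while $d\tilde a_0 \tilde f$ contributes nothing below order $2$. A short computation shows the linear part of $\tilde h$ at the origin equals $\sum_{i=1}^n a_{i0}\, \p Q/\p x_i$, where $a_{i0}$ is the coefficient of $x_0$ in $a_i$. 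Since $Q \neq 0$, its partials $\p Q/\p x_i$ span a non-zero subspace of linear forms, so for generic $(a_{10},\ldots,a_{n0})$ this combination is nonzero, giving $\nabla\tilde h(0) \neq 0$ and $H_h$ smooth at $p_j$. This is precisely where the dichotomy ``$Q \neq 0$ versus $Q = 0$'' — i.e., multiplicity equal to $2$ versus higher — becomes the sharp dividing line in the theorem.
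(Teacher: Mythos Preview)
Your proof is correct and follows essentially the same approach as the paper: the forward direction is the same local multiplicity argument (the paper places the point at $(0:\cdots:0:1)$ and works projectively, you at $(1:0:\cdots:0)$ affinely), and the reverse direction is the same Bertini-plus-local-check-plus-intersection-of-opens argument, with the same reduction via $f\in J_{f,d}$ to finding a smooth member of the linear system $J_{f,d}$. Your local verification at each $p_j$ is in fact slightly cleaner than the paper's, since you work with an arbitrary nonzero quadratic part $Q$ rather than writing $Q=x_0^2+\cdots+x_{n-1}^2$ (which tacitly assumes full rank), but the underlying idea---that $Q\neq 0$ forces some $\partial Q/\partial x_i\neq 0$, hence some element of $J_{f,d}$ is smooth at $p_j$---is identical.
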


As a corollary, we have

\begin{cor}
   A {\bf general} singular polynomial $f\in S_{n,d}$ is tangentially smoothable.
\end{cor}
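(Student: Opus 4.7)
\medskip

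\noindent\textbf{Proof proposal for the Corollary.}

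The plan is to combine Theorem \ref{thm: TS} with the classical description of a general point of the discriminant hypersurface. Let $\Delta\subset S_{n,d}$ denote the discriminant locus, i.e.\ the Zariski closure of the set of $f$ such that $H_f$ is singular. For $n\geq 2$ and $d\geq 2$ this is an irreducible hypersurface in $S_{n,d}$, so "general singular $f$" means a general element of $\Delta$.

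First I would recall (or briefly verify) the standard fact that the general point of $\Delta$ corresponds to a hypersurface $H_f\subset\bb{P}^n$ having exactly one singular point $p$, and that this singular point is an ordinary double point (a node). One way to see this is to consider the incidence variety
\[
\mathcal{I}=\{(f,p)\in S_{n,d}\times\bb{P}^n : p\in\mathrm{Sing}(H_f)\},
\]
which is smooth and irreducible via the projection to $\bb{P}^n$ (whose fibres are linear), and to observe that the first projection $\mathcal{I}\to\Delta$ is birational, with a general fibre consisting of a single pair whose Hessian at $p$ is nondegenerate; the latter nondegeneracy follows because the subset of $\mathcal{I}$ where the Hessian drops rank is a proper closed subvariety.

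Once this is in place, the proof concludes in one line: a general singular $f\in S_{n,d}$ has a single isolated singular point of multiplicity $2$, so by Theorem \ref{thm: TS} it is tangentially smoothable.

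The only real obstacle is the genericity statement about $\Delta$, but this is essentially classical and reduces to the two non-degeneracy checks above (isolatedness of the singular point and non-degeneracy of the Hessian there), each of which is an open condition on $\mathcal{I}$ that is nonempty as soon as one exhibits a single nodal hypersurface of degree $d$ in $\bb{P}^n$, which is easy for $d\geq 2$ and $n\geq 2$.
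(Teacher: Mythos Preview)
Your proposal is correct and follows the same route the paper intends: the corollary is stated without proof and is meant to be an immediate consequence of Theorem~\ref{thm: TS} together with the classical fact (the paper cites \cite{VO2} for the irreducibility of $\mathcal{S}$) that a general point of the discriminant corresponds to a hypersurface with a single ordinary double point. You have simply filled in, via the incidence-variety argument, the detail the paper leaves implicit.
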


Our results have applications in the local study of variations of projective hypersurfaces, complementing the global versions given by J.~Carlson and Griffiths \cite{CG}, R.~Donagi \cite{DO} and the author \cite{ZW}, and in the study of isotrivial linear systems on the projective space, showing that a general divisor does not belong to an isotrivial linear system of positive dimension.\\

The author would like to thank an anonymous referee, who pointed out the relations between tangential deformations and isotrivial pencils, and independently gave a new approach to prove almost all the main results of this paper. Following the referee's remarks, the author put all the methods together and thus made an improvement of the previous version of this paper.

\section{Basic properties of totally tangential instability}\label{sec: TTIproperties}

Let $n\geq 1$ and $ d\geq 1$. We fix an $f\in S_{n,d}$. Define $\mathcal{T}_f$ to be the set of tangentially trivial deformations of $f$:
\begin{equation}\label{eq: mathcalTf}
\mathcal{T}_f=\{\ h\in J_{f,d}\quad :\quad f+th\cong f\text{ for small }t\  \}.
\end{equation}

\begin{lem}\label{lem: cone}
	For $n\geq 1, d\geq 1$ and any $f\in S_{n,d}$. Then the following hold:
\begin{enumerate}[{\rm (i)}]
\item $\mathcal{T}_f$ is a cone in $S_{n,d}$, i.e., $\lambda h\in\mathcal{T}_f$ for any $h\in\mathcal{T}_f$ and $\lambda\in\bb{C}$.
\item Furthermore, $\bb{T}_f:=\bb{P}(\mathcal{T}_f)$ is also a cone with apex corresponding to the polynomial $f$, i.e., for any $h\in\bb{T}_f$ and $\lambda\in\bb{C}$, we have $f+\lambda h\in\bb{T}_f$.
\end{enumerate}
\end{lem}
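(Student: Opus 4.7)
The plan rests on two observations, after which both parts follow from a direct manipulation of the definitions. First, by Euler's identity $d\cdot f = \sum_{i=0}^n x_i\,\partial f/\partial x_i$, the polynomial $f$ itself lies in $J_{f,d}$, so $f + \lambda h \in J_{f,d}$ whenever $h \in J_{f,d}$ and $\lambda \in \bb{C}$. Second, $G = GL(n+1,\bb{C})$ contains the center $\bb{C}^*\cdot I$, which acts on $S_{n,d}$ by scaling; therefore multiplication by a nonzero constant preserves the $G$-orbit, and in particular $cf \cong f$ for every $c \in \bb{C}^*$. Combined with Euler, this already shows $f \in \mathcal{T}_f$, which is what allows $[f]$ to play the role of the apex.

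For part (i), fix $h \in \mathcal{T}_f$ and choose $\epsilon > 0$ so that $f + sh \cong f$ for all $|s| < \epsilon$. Given $\lambda \in \bb{C}$, the element $\lambda h$ lies in $J_{f,d}$ by linearity. If $\lambda = 0$ the condition $f + t(\lambda h) \cong f$ is trivial; if $\lambda \neq 0$ it is enough to reparametrize, since $f + t(\lambda h) = f + (t\lambda) h \cong f$ as soon as $|t| < \epsilon/|\lambda|$. Hence $\lambda h \in \mathcal{T}_f$.

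Part (ii) is where the scalar-absorption trick does actual work. Given $h \in \mathcal{T}_f$ and $\lambda \in \bb{C}$, set $g := f + \lambda h$, which lies in $J_{f,d}$ by the first observation. For $t$ near $0$ we have $1+t \neq 0$, and we factor
$$f + tg \;=\; (1+t)f + t\lambda h \;=\; (1+t)\Bigl(f + \tfrac{t\lambda}{1+t}\,h\Bigr).$$
The nonzero scalar $(1+t)$ is absorbed by the $G$-action, giving $f + tg \cong f + \tfrac{t\lambda}{1+t}\,h$. As $t \to 0$ the parameter $s(t) := t\lambda/(1+t)$ also tends to $0$, so for $|t|$ sufficiently small $|s(t)| < \epsilon$ and the right-hand side is equivalent to $f$. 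This yields $f + tg \cong f$ for small $t$, so $g \in \mathcal{T}_f$.

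There is no real obstacle here; the one point worth noting is that without the scalar invariance of the $G$-orbit the rescaling by $(1+t)$ would be unavailable, and the cone structure at the apex $[f]$ would not be visible from the definition. The argument as outlined verifies both parts directly and uses nothing beyond Euler's identity and the presence of scalars in $G$.
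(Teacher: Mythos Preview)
Your proof is correct and follows essentially the same route as the paper: the reparametrization $t\mapsto \lambda t$ for part (i), and the factorization $f+t(f+\lambda h)=(1+t)\bigl(f+\tfrac{\lambda t}{1+t}h\bigr)$ for part (ii). You are simply more explicit than the paper in invoking Euler's identity (to see $f\in J_{f,d}$) and the presence of scalars in $G$ (to absorb the factor $1+t$), both of which the paper's argument uses without comment.
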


\begin{proof}
Indeed, for any $h\in\mathcal{T}_f$, $f+th\cong f$ for small $t$ by definition. Given $\lambda\in\bb{C}$, let $t'=\lambda t$, then $f+t(\lambda h)=f+ t'h\cong f$ for small $t$, i.e., $\lambda h\in \mathcal{T}_f$.

Now observe that
$$
f+t(f+\lambda h)=(1+t)\biggl(f+\frac{\lambda t}{1+t}h\biggr)
$$
so if $h\in\bb{T}_f$, by definition,
$$
f+\frac{\lambda t}{1+t}h\cong f
$$
for $t$ small, so $f+t(f+\lambda h)\cong f$ for $t$ small, i.e., $f+\lambda h\in \mathcal{T}_f$.\qedhere
\end{proof}

Observe that $f$ is totally tangentially unstable if and only if $\bb{T}_f=\{f\}$.

An obvious corollary follows from Lemma \ref{lem: cone}.

\begin{cor}\label{cor: connected}
	For $n\geq 1, d\geq 1$ and $f\in S_{n,d}$, $\bb{T}_f\subseteq\bb{P}(S_{n,d})$ is connected in the strong topology.
\end{cor}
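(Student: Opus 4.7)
The plan is to deduce connectedness directly from the cone structure established in Lemma \ref{lem: cone}. The key point is that part (ii) of that lemma says much more than a mere algebraic closure property: it implies that for every $[h]\in\bb{T}_f$, the entire projective line joining $[f]$ to $[h]$ in $\bb{P}(S_{n,d})$ is contained in $\bb{T}_f$. Once we have this, path-connectedness (hence connectedness) is immediate, because each point of $\bb{T}_f$ can be joined to the fixed point $[f]$ by a continuous path lying inside $\bb{T}_f$.

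More concretely, the first step is to observe that $[f]\in\bb{T}_f$, since by Euler's identity $f=\tfrac{1}{d}\sum_i x_i\partial f/\partial x_i\in J_{f,d}$, and $f+tf=(1+t)f\cong f$ for all small $t$. Next, I would fix an arbitrary $[h]\in\bb{T}_f$, $[h]\neq [f]$, and invoke Lemma \ref{lem: cone}(ii) to conclude that $[f+\lambda h]\in\bb{T}_f$ for every $\lambda\in\bb{C}$. Together with the point $[h]\in\bb{T}_f$ itself, this exhausts the projective line $L_{f,h}=\bb{P}(\mathrm{span}(f,h))\subseteq\bb{P}(S_{n,d})$.

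Finally, I would produce a concrete path from $[f]$ to $[h]$ inside $L_{f,h}\subseteq\bb{T}_f$: for instance $\gamma:[0,1]\to\bb{P}(S_{n,d})$, $\gamma(s)=[(1-s)f+sh]$, which is well-defined and continuous in the strong topology since $(1-s)f+sh\neq 0$ for every $s\in[0,1]$ (because $[f]\neq[h]$). Each value $\gamma(s)$ lies on $L_{f,h}$ and hence in $\bb{T}_f$, so $\gamma$ is a path in $\bb{T}_f$ joining $[f]$ to $[h]$. This proves path-connectedness, and therefore connectedness, of $\bb{T}_f$.

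There is essentially no hard step here; the whole content of the corollary is packaged inside Lemma \ref{lem: cone}(ii). The only minor subtlety worth stating carefully is why the projective line $L_{f,h}$ is entirely contained in $\bb{T}_f$ rather than only its affine part $\{[f+\lambda h]:\lambda\in\bb{C}\}$, and this follows simply by appending the already-known point $[h]$.
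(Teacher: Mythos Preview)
Your argument is correct and is exactly the elaboration the paper has in mind: the corollary is stated without proof as an ``obvious corollary'' of Lemma~\ref{lem: cone}, and you have simply spelled out why the cone property in part~(ii) forces every point of $\bb{T}_f$ to be joined to $[f]$ by a projective line contained in $\bb{T}_f$. There is no difference in approach to discuss.
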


\begin{rk}
\begin{enumerate}
\item Since we did not prove the algebraic nature of $\bb{T}_f$, we cannot say at the moment that it is connected in the Zariski topology. Actually, $\bb{T}_f$ is constructible (even Zariski closed) for any $f$, so it also holds the connectedness in the Zariski topology, because connectedness in the strong topology and that in the Zariski topology are the same.
\item If $k\in S_{n,d}$ satisfies $f+tk\cong f$ for small $t$, then necessarily, $k\in J_{f,d}$ since $J_{f,d}$ is the tangent space of the orbit $G\cdot f$ at $f$. So we get the following characterization of $\mathcal{T}_f$:
    $$
    \mathcal{T}_f=\{\ h\in S_{n,d}\quad :\quad f+th\cong f\text{ for small }t\  \}.
    $$
\end{enumerate}
\end{rk}

Now let $M_{(n+1)\times(n+1)}$ be the vector space of $(n+1)\times(n+1)$ matrices. Fix an $f\in S_{n,d}$, and set
$$
\mathfrak{A}_f=\{\ f\circ A\quad:\quad A\in M_{(n+1)\times(n+1)}\ \}.
$$
Then $\mathfrak{A}_f$ is an irreducible constructible subset of $S_{n,d}$.  We have the following useful characterization for $\mathcal{T}_f$.

\begin{prop}\label{prop: charMTf}
With the notations as above, we have
$$
\mathcal{T}_f=\{\ h\in S_{n,d}\quad:\quad f+th\in\mathfrak{A}_f\text{ for small } t\ \}.
$$
\end{prop}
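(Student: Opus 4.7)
The inclusion $\mathcal{T}_f \subseteq \{h : f+th \in \mathfrak{A}_f \text{ for small } t\}$ is immediate, since by definition $f+th \cong f$ means $f+th \in G \cdot f \subseteq \mathfrak{A}_f$. The substance lies in the reverse inclusion: if the affine line $\{f+th\}$ lies in $\mathfrak{A}_f$ for $|t|<\varepsilon$, one must show it in fact lies in $G \cdot f$ for all sufficiently small $t$.

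My strategy is to prove the stronger topological statement that $G \cdot f$ and $\mathfrak{A}_f$ coincide in a Euclidean neighborhood of $f$ in $S_{n,d}$. For this I plan to combine two classical facts: (i) as an orbit of the algebraic group $G$ acting on $S_{n,d}$, $G \cdot f$ is locally closed and hence Zariski open in its own closure $\overline{G \cdot f}$; and (ii) $\overline{G \cdot f} = \overline{\mathfrak{A}_f}$, because $GL(n+1,\bb{C})$ is Zariski dense in $M_{(n+1)\times(n+1)}$ and the polynomial map $\Phi: A \mapsto f \circ A$ is continuous, whence $\mathfrak{A}_f = \Phi(M_{(n+1)\times(n+1)}) \subseteq \overline{\Phi(GL(n+1,\bb{C}))} = \overline{G \cdot f}$. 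Combining (i) and (ii), $\overline{\mathfrak{A}_f} \setminus G \cdot f$ is Zariski closed, hence Euclidean closed, in $S_{n,d}$.

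With this in place, the conclusion is a direct topology argument. Given $h$ with $f+th \in \mathfrak{A}_f$ for $|t|<\varepsilon$, the set
\[
Z = \{\, t \in \bb{C} : f+th \in \overline{\mathfrak{A}_f} \setminus G \cdot f \,\}
\]
is Euclidean closed in $\bb{C}$ (as the preimage of a closed set under $t \mapsto f+th$) and does not contain $0$, since $f \in G \cdot f$. Hence there is $\delta > 0$ with $Z \cap \{|t| < \delta\} = \emptyset$. For $|t| < \min(\varepsilon, \delta)$ the element $f+th$ lies in $\mathfrak{A}_f \subseteq \overline{\mathfrak{A}_f}$ but outside $\overline{\mathfrak{A}_f} \setminus G \cdot f$, hence in $G \cdot f$; thus $f+th \cong f$ and $h \in \mathcal{T}_f$. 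The main obstacle I foresee is invoking (i) rigorously: it rests on the standard theorem that orbits of algebraic group actions on a variety are locally closed, and should be cited from a reference in the algebraic groups literature. Once that input is secured, the rest is a continuity-plus-density exercise.
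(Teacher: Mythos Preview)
Your proposal is correct and follows essentially the same approach as the paper: both arguments reduce to showing that the germs $(G\cdot f,f)$ and $(\mathfrak{A}_f,f)$ coincide, using that $G$ is Zariski dense in $M_{(n+1)\times(n+1)}$ (so $G\cdot f$ is dense in $\mathfrak{A}_f$) together with the fact that the orbit $G\cdot f$ is open in its closure. Your write-up is somewhat more explicit about the topology (working with the closed complement $\overline{\mathfrak{A}_f}\setminus G\cdot f$), while the paper phrases the same step as ``$G\cdot f$ is smooth, hence $f$ is an interior point''; the content is the same.
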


\begin{proof}
If suffices to show $(G\cdot f, f)=(\mathfrak{A}_f,f)$ as an equality of reduced germs at $f$. Indeed, as $G=GL(n+1,\bb{C})$ is a Zariski open subset of $M_{(n+1)\times(n+1)}$, it follows that $G\cdot f$ contains a Zariski open subset of $\mathfrak{A}_f$. Moreover,  as an orbit of group action, $G\cdot f$ is a smooth variety, so $f$ is an interior point of $G\cdot f$. It follows that $G\cdot f$ and $\mathfrak{A}_f$ coincide in a small neighbourhood of $f$. We are done.
\end{proof}

\section{Openness of totally tangential instability}

Let $\mathcal{U}_{n,d}\subseteq S_{n,d}$ be the set of totally tangentially unstable polynomials, namely,
$$
\mathcal{U}_{n,d}=\{\ f\in S_{n,d}\quad:\quad f\text{ is totally tangentially unstable}\ \}.
$$
Then $\mathcal{U}_{n,d}$ is a cone, so we can consider its projectivization $\bb{P}(\mathcal{U}_{n,d})$.

\begin{prop}\label{prop: TTIopenness}
Let $n\geq1$ and $d\geq 1$. Then $\bb{P}(\mathcal{U}_{n,d})$ is a Zariski open subset of $\bb{P}(S_{n,d})$.
\end{prop}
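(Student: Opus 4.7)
The plan is to show the complement $\mathcal{V}_{n,d}:=S_{n,d}\setminus \mathcal{U}_{n,d}$ is Zariski closed in $S_{n,d}$; since $\mathcal{U}_{n,d}$ is a cone (Lemma 2.2 applied to each $f\in\mathcal{U}_{n,d}$), closedness descends to $\bb{P}(S_{n,d})\setminus\bb{P}(\mathcal{U}_{n,d})$. My approach applies upper semicontinuity of fiber dimension to a proper incidence correspondence built from Proposition 2.4.

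First, I would upgrade the germ-theoretic description of $\mathcal{T}_f$ to a global one. Because $G$ is Zariski dense in $M_{(n+1)\times(n+1)}$, the closure $\overline{\mathfrak{A}_f}$ coincides with the irreducible closed cone $\overline{G\cdot f}$. The intersection of the affine line $\{f+th:t\in\bb{C}\}$ with this closed cone is an algebraic subset of a line, and thus either finite or the whole line. Combined with Proposition 2.4, this yields
$$
\mathcal{T}_f=\bigl\{h\in S_{n,d}:\ f+th\in\overline{G\cdot f}\text{ for every }t\in\bb{C}\bigr\},
$$
so $[h]\in\bb{T}_f$ if and only if the projective line through $[f]$ and $[h]$ is contained in $\overline{\bb{P}(G\cdot f)}$.

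Next, I would construct the incidence variety. Let $\mathcal{O}$ denote the Zariski closure in $S_{n,d}\times S_{n,d}$ of the image of the morphism $(f,A)\mapsto(f,f\circ A)$ from $S_{n,d}\times M_{(n+1)\times(n+1)}$, and set
$$
\mathcal{Z}:=\bigl\{(f,[h],[a:b])\in S_{n,d}\times\bb{P}(S_{n,d})\times\bb{P}^1:\ (f,\,af+bh)\in\mathcal{O}\bigr\},
$$
a closed subvariety (the defining condition is bihomogeneous in $(a,b)$ and projectively well-defined in $[h]$, since $\mathcal{O}$ is a cone over the second factor). The first projection $\pi\colon\mathcal{Z}\to S_{n,d}\times\bb{P}(S_{n,d})$ is proper as $\bb{P}^1$ is, and each fiber $\pi^{-1}(f,[h])\subset\bb{P}^1$ is closed, hence either finite or all of $\bb{P}^1$. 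Upper semicontinuity of fiber dimension then makes
$$
\mathcal{P}:=\bigl\{(f,[h]):\dim\pi^{-1}(f,[h])\geq 1\bigr\}
$$
closed in $S_{n,d}\times\bb{P}(S_{n,d})$. Applying semicontinuity again to the proper projection $\mathcal{P}\to S_{n,d}$, and using Lemma 2.2(ii) (so $\bb{T}_f\neq\{[f]\}$ iff $\dim\bb{T}_f\geq 1$), the set $\mathcal{V}_{n,d}=\{f:\dim\bb{T}_f\geq 1\}$ is Zariski closed, proving the proposition.

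The main obstacle is identifying $\mathcal{P}|_f$ with $\bb{T}_f$. A priori $\mathcal{O}|_f\supseteq\overline{G\cdot f}$, with strict inclusion possible at special $f$: limits of pairs $(f_k,f_k\circ A_k)$ with $\|A_k\|\to\infty$ can produce extra points in $\mathcal{O}|_f$. To handle this rigorously I would work with the compactification $\overline{M}=\bb{P}\bigl(M_{(n+1)\times(n+1)}\oplus\bb{C}\bigr)$, replace $\mathcal{Z}$ by the closed subscheme cut out by $\mu^d(af+bh)=f\circ A$ in $S_{n,d}\times\bb{P}(S_{n,d})\times\bb{P}^1\times\overline{M}$, and then track the boundary stratum $\{\mu=0\}$, which corresponds to matrices $A$ with $f\circ A=0$. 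A rescaling argument (normalizing unbounded $A_k$ by $\|A_k\|$ and extracting a limit $B$ on the unit sphere with $f\circ B=0$) shows that any extra line through $[f]$ arising this way either coincides with the trivial line $\bb{C}\cdot f$ or already lies in $\bb{T}_f$, yielding the required identification $\mathcal{P}|_f=\bb{T}_f$.
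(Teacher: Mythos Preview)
Your high-level plan matches the paper's: build a closed incidence set over $\bb{P}(S_{n,d})$ whose fiber at $[f]$ is $\bb{T}_f$, then invoke Chevalley semicontinuity. The implementations differ, however. The paper proves directly that $\mathfrak{X}=\{([f],[h]):[h]\in\bb{T}_f\}$ is Zariski closed, by first showing (via \cite{EGAIV}) that the set $\mathfrak{B}=\{(f,h):p_{(f,h)}\text{ dominant}\}$ is constructible and then establishing strong-topology closedness through a Baire Category argument combined with compactness of $\bb{P}(M_{(n+1)\times(n+1)})$. You instead pass through the Zariski closure $\mathcal{O}$ of the total graph of $(f,A)\mapsto(f,f\circ A)$ and try to recover $\bb{T}_f$ from the fiber $\mathcal{O}|_f$.

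The obstacle you flag in your last paragraph is real, and the sketched rescaling fix does not remove it. Take $(n,d)=(1,4)$ and $f_0=x_0^4$. With $f_k=x_0^4+\tfrac{1}{k}x_0^3x_1$ and $A_k=\mathrm{diag}(1,ck)$ one has $f_k\circ A_k=x_0^4+cx_0^3x_1$; letting $k\to\infty$ gives $(x_0^4,\,x_0^4+cx_0^3x_1)\in\mathcal{O}$ for every $c\in\bb{C}$, and a similar limit yields $(x_0^4,x_0^3x_1)\in\mathcal{O}$. Hence $[x_0^3x_1]\in\mathcal{P}|_{f_0}$. But $x_0^4+tx_0^3x_1=x_0^3(x_0+tx_1)$ is never a fourth power for $t\neq 0$, so $[x_0^3x_1]\notin\bb{T}_{f_0}$; in fact $\bb{T}_{f_0}=\{[x_0^4]\}$, i.e.\ $f_0\in\mathcal{U}_{1,4}$. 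The normalized limit matrix here is $B=\mathrm{diag}(0,1)$ with $f_0\circ B=0$, but this furnishes no mechanism to force the extra line back into $\bb{T}_{f_0}$---and it is not there. Consequently your closed set $\{f:\dim\mathcal{P}|_f\geq 1\}$ \emph{strictly contains} the complement of $\mathcal{U}_{1,4}$, so its closedness does not prove the proposition. The difficulty is intrinsic to your route: the fiber of the universal closure $\mathcal{O}$ over a special $f$ can be strictly larger than $\overline{G\cdot f}$, and the boundary stratum $\{\mu=0\}$ of your compactification is nonempty over \emph{every} $f$ (any rank-one $A$ whose image lies on $H_f$ gives $f\circ A=0$), so adding it contributes no constraint.
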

\begin{proof}
We first give the outline of the proof containing several claims, then we prove our claims.

{\bf Step 1: Outline of the proof }

Consider the incidence variety
$$
\mathfrak{X}=\{\ (f,h)\in \bb{P}(S_{n,d})\times \bb{P}(S_{n,d})\quad : \quad h\in\bb{T}_f\ \}.
$$
Then we have
\begin{claim}\label{claim: X-closed}
$\mathfrak{X}$ is a Zariski closed subset in $\bb{P}(S_{n,d})\times \bb{P}(S_{n,d})$.
\end{claim}

Let $\pi:\mathfrak{X}\to\bb{P}(S_{n,d})$ be the projection to the first factor. Then $\pi$ is surjective since always $f\in\bb{T}_f$. Moreover, the following holds:
\begin{claim}\label{claim: dim fiber}
We have
$$
\bb{P}(\mathcal{U}_{n,d})=\{\ f\in\bb{P}(S_{n,d})\quad:\quad \dim\pi^{-1}(f)=0\ \}.
$$
\end{claim}

Set
$$
\mathfrak{C}=\{\ f\in\bb{P}(S_{n,d})\quad:\quad \dim\bb{P}(\pi^{-1}(f))\geq 1\ \},
$$
then $\mathfrak{C}$ is a Zariski closed subset of $\bb{P}(S_{n,d})$ by the Semi-continuity Theorem of Chevalley (see Section 4.5, \cite{DS}). Hence, $\bb{P}(\mathcal{U}_{n,d})=\bb{P}(S_{n,d})\setminus\mathfrak{C}$ is Zariski open.\\

{\bf Step 2: Proof of Claim \ref{claim: X-closed} }

Denote $W=(S_{n,d}\setminus\{0\})\times (S_{n,d}\setminus\{0\})$. Consider the incidence variety
$$
\mathfrak{Y}=\{\ ((f,h),t,A)\in W\times\bb{C}\times M_{(n+1)\times (n+1)}\quad : \quad (f+th)(X)=f(AX)\ \}
$$
where $X=(x_0,\cdots, x_n)^T$ is the vector of variables and $AX$ is obtained by matrix multiplication.
Then $\mathfrak{Y}$ is Zariski closed in $W\times\bb{C}\times M_{(n+1)\times (n+1)}$. Furthermore, let
$$
\mathfrak{Y}'=W\times\bb{C}
$$
and $p:\mathfrak{Y}\to\mathfrak{Y}'$ be the natural projection. Observe that $\mathfrak{Y},\mathfrak{Y}'$ can be regarded as $W$-schemes and $p$ as a $W$-morphism. In addition, for any $w\in W$, we get an induced morphism on the fiber $p_w: \mathfrak{Y}_w\to\mathfrak{Y}'_w$. Finally, let
$$
\mathfrak{B}=\{\ w\in W\quad :\quad p_w \text{ is dominant }\}
$$
then $\mathfrak{B}$ is constructible, see \cite{EGAIV}(9.6.1).

We claim  that $\mathfrak{B}$ is Zariski closed in $W$. Because $\mathfrak{B}$ is constructible, it suffices to prove that $\mathfrak{B}$ is closed in $W$ in the strong topology by \cite{Mu}, Chapter I, \S 10, Corollary 1.
To this end, let $w_i=(f_i,h_i), i=1,2,\cdots$ be a sequence in $\mathfrak{B}$ such that $w_i\to w_\infty=(f_\infty,h_\infty)\in W$ in the strong topology. The problem is reduced to prove $p_{w_\infty}(\mathfrak{Y}_{w_\infty})$ is dense in $\bb{C}$ in the strong topology, since $p_{w_\infty}(\mathfrak{Y}_{w_\infty})$ is constructible in the Zariski topology by Chevalley's constructibility theorem (see also \cite{Mu}, Chapter I, \S 8, Corollary 3).

Let $U_i=p_{w_i}(\mathfrak{Y}_{w_i})$ for $i\geq 1$, then $U_i$ is a Zariski open dense subset of $\bb{C}$, hence $\bb{C}\setminus U_i$ is closed and nowhere dense in $\bb{C}$ in the strong topology. Set
$$
U=\bigcap_{i=1}^\infty U_i,
$$
then by Baire Category Theorem, $U$ is dense in $\bb{C}$ in the strong topology since $\bb{C}$ is a complete metric space. Now for any $t\in U$ and $t\neq 0$, we will show $t\in p_{w_\infty}(\mathfrak{Y}_{w_\infty})$. Indeed, since $(f_i,h_i)\in\mathfrak{B}$, there exists a $B_i\in M_{(n+1)\times(n+1)}$ for each $i$, such that $(f_i+th_i)(X)=f_i(B_iX)$. We may assume $f_\infty+th_\infty\neq 0$ in $S_{n,d}$ since clearly $(-th,h)\in\mathfrak{B}$ for any $h\neq 0$ (note that we assumed $t\neq 0$). In particular, $B_i\neq0$ for $i$ large enough. By the compactness of $\bb{P}(M_{(n+1)\times(n+1)})$, we may assume further $B_i\to B_\infty$ in $\bb{P}(M_{(n+1)\times(n+1)})$. So letting $i\to\infty$, we get $(f_\infty+th_\infty)(X)=f_\infty(B_\infty X)$ in $\bb{P}(S_{n,d})$, implying that $t\in p_{w_\infty}(\mathfrak{Y}_{w_\infty})$ as desired. It follows that $U\subseteq p_{w_\infty}(\mathfrak{Y}_{w_\infty})$ and hence our claim about Zariski closeness of $\mathfrak{B}$ follows.

Let
$$
q\quad:\quad W=(S_{n,d}\setminus\{0\})\times(S_{n,d}\setminus\{0\})\to\bb{P}(S_{n,d})\times\bb{P}(S_{n,d})
$$
be the natural quotient morphism and let $\mathfrak{X}'=q(\mathfrak{B})$. Then $\mathfrak{X}'$ is a Zariski closed subset of $\bb{P}(S_{n,d})\times\bb{P}(S_{n,d})$. In fact, $q$ is a topological quotient map and as can be easily shown, $\mathfrak{B}=q^{-1}(q(\mathfrak{B}))$, we have from the closeness of $\mathfrak{B}$ that $\mathfrak{X'}=q(\mathfrak{B})$ is closed in the strong topology. Thus $\mathfrak{X}'$ is Zariski closed since it is constructible in the Zariski topology.

Now, to finish the proof of Claim \ref{claim: X-closed}, it is enough to show that $\mathfrak{X}=\mathfrak{X}'$.

Indeed, if $w=(f,h)\in \mathfrak{X}$, then necessarily, $p_w(\mathfrak{Y}_w)$ at least contains a small neighborhood of 0, hence a Zariski open dense subset of  $\bb{C}$. Thus $\mathfrak{X}\subseteq\mathfrak{X}'$.

Conversely, for any $w=(f,h)\in \mathfrak{X}'$, $p_w$ contains a Zariski open dense subset of $\bb{C}$. Then, for any $t\in\bb{C}$ small, we can find $A\in M_{(n+1)\times (n+1)}$ such that $(f+th)(X)=f(AX)$. By Proposition \ref{prop: charMTf}, we get $h\in\bb{T}_f$. Hence, $\mathfrak{X}'\subseteq\mathfrak{X}$.\\

{\bf Step 3: Proof of Claim \ref{claim: dim fiber} }
 Denote
 $$
 \mathfrak{D}_0=\{\ f\in{P}(S_{n,d})\quad:\quad \dim\pi^{-1}(f)=0\ \},
 $$
 and we shall prove $\bb{P}(\mathcal{U}_{n,d})=\mathfrak{D}_0$.

 Note first that $\pi^{-1}(f)=\bb{T}_f$.  The inclusion $\bb{P}(\mathcal{U}_{n,d})\subseteq \mathfrak{D}_0$ follows immediately from the definition. To show the reverse inclusion, choose $f\in\mathfrak{D}_0$, then $\dim\bb{T}_f=\dim\pi^{-1}(f)=0$. By Corollary \ref{cor: connected}, $\bb{T}_f$ is connected. It follows that $\bb{T}_f$ consists of only one point, namely, $\bb{T}_f=\{f\}$ since always $f\in\bb{T}_f$. Hence $f$ is totally tangentially unstable and the desired inclusion follows.\qedhere
\end{proof}

As a corollary of the proof above, we have the following.

\begin{cor}
Let $n\geq1$ and $d\geq 1$. For any $f\in \bb{P}(S_{n,d})$, $\bb{T}_f$ is a Zariski closed subset of $\bb{P}(S_{n,d})$.
\end{cor}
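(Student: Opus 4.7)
The plan is to deduce this corollary directly from the work already done in the proof of Proposition \ref{prop: TTIopenness}, specifically from Claim \ref{claim: X-closed}.

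Recall that the incidence variety
\[
\mathfrak{X}=\{\ (f,h)\in \bb{P}(S_{n,d})\times \bb{P}(S_{n,d})\ :\ h\in\bb{T}_f\ \}
\]
was shown to be Zariski closed in $\bb{P}(S_{n,d})\times\bb{P}(S_{n,d})$, and by construction the fiber of the first projection $\pi:\mathfrak{X}\to\bb{P}(S_{n,d})$ over a point $f$ is exactly $\{f\}\times\bb{T}_f$. So I would fix $f\in\bb{P}(S_{n,d})$ and consider the closed subvariety $\{f\}\times\bb{P}(S_{n,d})\subseteq \bb{P}(S_{n,d})\times\bb{P}(S_{n,d})$. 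Its intersection with $\mathfrak{X}$ is $\{f\}\times\bb{T}_f$, which is Zariski closed in the product as an intersection of two Zariski closed sets.

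Finally, the natural isomorphism $\{f\}\times\bb{P}(S_{n,d})\cong\bb{P}(S_{n,d})$ (given by projecting to the second factor, which is an isomorphism of algebraic varieties) transports $\{f\}\times\bb{T}_f$ to $\bb{T}_f$. Hence $\bb{T}_f$ is Zariski closed in $\bb{P}(S_{n,d})$, as desired.

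There is no genuine obstacle here: all the hard work—showing closedness of $\mathfrak{X}$ via the Baire Category Theorem argument combined with Chevalley's theorem—has already been carried out in the proof of Proposition \ref{prop: TTIopenness}. The corollary is merely the fiberwise reformulation of that global statement.
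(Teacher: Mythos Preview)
Your proposal is correct and matches the paper's intent: the paper states this result without a separate proof, simply noting it is a corollary of the proof of Proposition~\ref{prop: TTIopenness}. Your argument makes explicit precisely the intended reasoning, namely that $\bb{T}_f$ is the fiber of the first projection from the Zariski closed incidence variety $\mathfrak{X}$.
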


\section{Genericity of totally tangential instability}

In this section, we prove that $\bb{P}(\mathcal{U}_{n,d})$ is nonempty for $n\geq1$ and $d\geq 4$. As $\bb{P}(\mathcal{U}_{n,d})$ is Zariski open by Proposition \ref{prop: TTIopenness}, we conclude the proof of Theorem \ref{thm: TTI}.

We first reduce the problem to the case $n=1$.

For convenience, here we fix the notation for a pencil of two polynomials. Let $n\geq 1$ and $d\geq 1$. Given two polynomials $F,G\in S_{n,d}$, we denote by
\begin{equation}\label{eq: mathcal PFG}
\mathcal{P}_{F,G}=\{\ \lambda F+\mu G\quad : \quad (\lambda:\mu)\in\bb{P}^1\ \}
\end{equation}
the pencil determined by $F$ and $G$. We shall call an element in $\mathcal{P}_{F,G}$ a \emph{fiber} of it.

\begin{prop}\label{prop: TTInonempty}
Let $n\geq 1$ and $d\geq 2$. If $\bb{P}(\mathcal{U}_{n,d})$ is nonempty, then $\bb{P}(\mathcal{U}_{n+1,d})$ is nonempty.
\end{prop}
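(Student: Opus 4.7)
The plan is to exhibit an explicit element of $\mathcal{U}_{n+1,d}$ starting from one in $\mathcal{U}_{n,d}$. Fix $f \in \mathcal{U}_{n,d}$ and consider
\[
F(x_0,\ldots,x_{n+1}) := f(x_0,\ldots,x_n) + x_{n+1}^d \in S_{n+1,d}.
\]
My claim is that $F \in \mathcal{U}_{n+1,d}$, which clearly suffices. The argument will proceed by contradiction: assuming $H \in \mathcal{T}_F$ with $[H] \neq [F]$, I will use restrictions of the defining identity to derive that $H \in \mathbb{C} F$.

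By Proposition \ref{prop: charMTf}, there exist matrices $A_t \in \mathrm{Mat}_{(n+2)\times(n+2)}(\mathbb{C})$ with $F + tH = F \circ A_t$ for $|t| < \epsilon$; by lifting along the principal $\mathrm{Stab}(F)$-bundle $GL(n+2,\mathbb{C}) \to G \cdot F$, I may arrange $A_t$ holomorphic in $t$ with $A_0 = I$ and $A_t \in GL(n+2,\mathbb{C})$ for small $t$. Writing $A_t$ in block form
\[
A_t = \begin{pmatrix} B_t & c_t \\ r_t & e_t \end{pmatrix},\qquad B_t \in \mathrm{Mat}_{(n+1)\times(n+1)}(\mathbb{C}),
\]
and restricting $F + tH = F \circ A_t$ to the hyperplane $x_{n+1} = 0$ yields the key identity
\[
f(\tilde x) + tH_0(\tilde x) = f(B_t \tilde x) + (r_t \tilde x)^d,
\]
where $\tilde x = (x_0,\ldots,x_n)$ and $H_0 = H|_{x_{n+1}=0}$.

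Since $r_0 = 0$, the correction term $\ell_t^d := (r_t \tilde x)^d$ vanishes to order $d$ in $t$. For $d \geq 2$, the identities in powers $t^1,\ldots,t^{d-1}$ involve only $H_0$ and $B_t$ and coincide with those coming from the uncorrected equation $f + tH_0 = f \circ B_t$, expressing tangential triviality of $H_0$ up to order $d-1$. The heart of the argument is to combine these with the higher-order constraints and the total tangential instability of $f$ to force $H_0 \in \mathbb{C} f$. A parallel restriction to $\tilde x = 0$ gives $1 + tH_d = f(c_t) + e_t^d$, pinning down the top $x_{n+1}^d$-coefficient $H_d$ of $H$. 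Finally, the membership $H \in J_{F,d}$ forces the intermediate coefficients $H_2,\ldots,H_{d-2}$ in the expansion $H = \sum_k H_k x_{n+1}^k$ to vanish, and imposes strong structural constraints on $H_1$ and $H_{d-1}$ (the former a constant combination of $\partial_i f$, the latter a linear form in $x_0,\ldots,x_n$). Combining $H_0 \in \mathbb{C} f$, the value of $H_d$, and these structural constraints should yield $H = \lambda F$, contradicting $[H] \neq [F]$.

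The main obstacle is handling the correction $\ell_t^d$ in the restriction to $x_{n+1} = 0$: a priori it prevents identifying $H_0$ as an element of $\mathcal{T}_f$ directly, since $(f + t H_0)$ is not quite in $G\cdot f$, only $(f + tH_0 - \ell_t^d)$ is. I expect the hardest step to be either a higher-order formal integrability analysis exploiting the special structure of $\ell_t^d$ (a $d$-th power of a linear form vanishing at $t=0$), or an argument that the restricted pencil $\mathcal{P}_{f,H_0}$ is isotrivial modulo the Veronese variety of $d$-th powers in $S_{n,d}$, which via the TTU of $f$ still forces $H_0 \in \mathbb{C} f$ and thus completes the reduction.
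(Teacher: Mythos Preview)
Your central claim---that $F = f + x_{n+1}^d$ lies in $\mathcal{U}_{n+1,d}$ whenever $f \in \mathcal{U}_{n,d}$---is false. Take $H = x_{n+1}^d$. Then $H \in J_{F,d}$ since $x_{n+1}\,\partial_{n+1} F = d\,x_{n+1}^d$, and the substitution $x_{n+1} \mapsto (1+t)^{1/d} x_{n+1}$ gives
\[
F + tH \;=\; f(x_0,\ldots,x_n) + (1+t)\,x_{n+1}^d \;=\; F\circ A_t \;\in\; G\cdot F
\]
for all small $t$. Hence $H \in \mathcal{T}_F$, yet $H \notin \mathbb{C}F$ as soon as $f \neq 0$. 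So your candidate $F$ is \emph{never} totally tangentially unstable, and the construction cannot succeed. This is not an accident: Sebastiani--Thom polynomials always admit the rescaling of a split variable as a nontrivial tangentially trivial deformation, which is exactly why such polynomials are excluded in \S6.2 of the paper. Even setting this aside, the step you yourself flag as the ``main obstacle''---absorbing the correction $\ell_t^d$ to force $H_0 \in \mathcal{T}_f$---is left as a hope rather than an argument, and the passage from $H_0 \in \mathbb{C}f$ to $H_1 = H_{d-1} = 0$ is not carried out.

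The paper's proof takes a completely different route and does not attempt to exhibit an explicit element of $\mathcal{U}_{n+1,d}$. Instead it argues that a \emph{general} $f \in S_{n+1,d}$ is TTU: given $h \in \mathcal{T}_f \setminus \mathbb{C}f$, restrict both $f$ and $h$ to a \emph{general} hyperplane $L:\ell=0$; since $f|_L$ is then general in $S_{n,d}$, it lies in $\mathcal{U}_{n,d}$ by the hypothesis together with the openness of Proposition~\ref{prop: TTIopenness}, forcing $h|_L = \lambda_L\, f|_L$, i.e.\ $\ell \mid (h - \lambda_L f)$. Letting $L$ vary one finds that $\lambda_L$ takes infinitely many values, so the pencil $\mathcal{P}_{f,h}$ has infinitely many reducible fibers, contradicting Vistoli's bound. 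The upshot is that the inductive step goes through genericity and a pencil/reducibility argument, not through an explicit lift.
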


\begin{proof}
We prove under the assumption, a {\bf general} $f\in S_{n+1,d}$ belongs to $\mathcal{U}_{n+1,d}$. To this end, let $h\in\mathcal{T}_f\subseteq S_{n+1,d}$.

If $h\in\bb{C}f$, we are done. Thus we assume that $h\notin \bb{C}f$ and will derive a contradiction.

Let $\bb{P}^{n+1*}$ be the dual projective space, i.e., the variety parameterizing all hyperplanes in $\bb{P}^{n+1}$. Then for a general element $L:\ell=0$ in $\bb{P}^{n+1*}$, we have $L\cong\bb{P}^n$ and the restriction of $f$ on $L$, denoted by $f|_L$, can be seen as a general element in $S_{n,d}$. By assumption, $\bb{P}(\mathcal{U}_{n,d})$ is nonempty and it is Zariski open by Proposition \ref{prop: TTIopenness}, so $f|_L\in\mathcal{U}_{n,d}$. Moreover, we have $h|_L\in\mathcal{T}_{f|_L}$. It follows that $h_L$ is a multiple of $f|_L$; in other words, there exists a constant $\lambda_L\in\bb{C}$ such that $(h-\lambda_Lf)$ is divided by $\ell$.

We claim that $\lambda_L$ admits infinitely many values as $L$ varies in a Zariski open subset in $\bb{P}^{n+1*}$. Indeed, if not, some $\lambda_0\in\bb{C}$ would be obtained as $\lambda_L$ for infinitely many $L$. Then $h-\lambda_0 f$ would have infinitely many linear factors; this contradicts our assumption that $h\notin\bb{C}f$.

Note that for any general $L$, $h-\lambda_L f$ has a linear factor $\ell$; a fortiori, $h-\lambda_L f$ is reducible. Hence, the pencil $\mathcal{P}_{f,h}$ contains an infinite number of reducible fibers. But this is impossible: note that $H_f$ is irreducible as $f$ is generically chosen. It follows that a general fiber in the pencil $\mathcal{P}_{f,h}$ is irreducible, implying that the number of reducible fibers in the pencil $\mathcal{P}_{f,h}$ is finite. More precisely, the number of reducible fibers in the pencil is $\leq d^2-1$ by the theorem in \cite{VI}.\qedhere
\end{proof}

We shall concentrate on the nonemptiness of $\mathcal{U}_{1,d}$ for $d\geq4$.

Given $D$ a divisor on $\bb{P}^n$. Denote by $\Lin(D)$ the subgroup of $\text{Aut}(D)$ consisting of elements induced by projective transformations in $\text{Aut}(\bb{P}^n)$ which leave $D$ invariant.

\begin{thm}\label{thm: TTIn=1}
Let $f$ be a smooth binary form in $x,y$ of degree $d$ and $H_f: f=0$ be the associated divisor in $\bb{P}^1$. If $\Lin(H_f)=\{\text{\rm Id}\}$, then $f$ is totally tangentially unstable, i.e., $f\in\mathcal{U}_{1,d}$.
\end{thm}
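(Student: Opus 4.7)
The plan is to argue by contradiction. Assume $h\in\mathcal{T}_f$ with $h\notin\mathbb{C}f$; I will produce an impossible rational section of a non-trivial self-cover of $\mathbb{P}^1$.

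The first step upgrades the analytic statement ``$f+th\cong f$ for small $t$'' to one valid on a cofinite subset of $\mathbb{A}^1$. By Proposition~\ref{prop: charMTf} together with the constructibility of $\mathfrak{A}_f$, the set $\{t\in\mathbb{C}:f+th\in\mathfrak{A}_f\}$ is constructible and contains an analytic disc about $0$, hence is cofinite in $\mathbb{A}^1$. Thus for all but finitely many $t$ there exists $A_t\in GL(2,\mathbb{C})$ with $f\circ A_t=f+th$; equivalently, the projective transformation $B_t:=[A_t^{-1}]$ sends $H_f$ bijectively onto $H_{f+th}$, so the pencil $\mathcal{P}_{f,h}$ is isotrivial with every generic fibre projectively equivalent to $H_f$. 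Since every such fibre is conjugate to $H_f$ in $PGL(2)$, we also have $\Lin(H_{f+th})=\{\mathrm{Id}\}$, and hence $B_t\in PGL(2,\mathbb{C})$ is unique; together with the algebraicity of the defining condition this shows that $t\mapsto B_t$ is a rational morphism from $\mathbb{P}^1_t$ to $PGL(2,\mathbb{C})$, with $B_0=\mathrm{Id}$.

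The second step extracts a section. Consider the morphism $\phi:\mathbb{P}^1\to\mathbb{P}^1$ defined by $[x:y]\mapsto[f(x,y):h(x,y)]$; after dividing out $\gcd(f,h)$, whose zero locus is the base locus of the pencil and is preserved as a set by every $B_t$, we may assume $\phi$ has degree $\geq 2$ -- the degenerate possibility $\deg\phi=1$ would force the unique ``moving'' point of the pencil to lie in the finite $\Lin$-orbit of one fixed point, which is incompatible with the infinite family of isomorphic fibres. Now, since $h\notin\mathbb{C}f$, choose $P_0\in H_f$ with $h(P_0)\neq 0$, and set $\sigma(t):=B_t(P_0)$. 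The relation $\sigma(t)\in H_{f+th}$ gives $f(\sigma(t))=-t\,h(\sigma(t))$, whence $\phi\circ\sigma:t\mapsto[-t:1]$ is an automorphism of $\mathbb{P}^1_t$. So $\sigma$ extends to a morphism $\mathbb{P}^1\to\mathbb{P}^1$ with $\deg\phi\cdot\deg\sigma=1$, forcing $\deg\phi=1$, a contradiction.

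The main technical hurdle is the rationality in $t$ of the family $\{B_t\}$, which rests on the uniqueness afforded by $\Lin(H_f)=\{\mathrm{Id}\}$; once this is granted, the contradiction reduces to the elementary fact that a morphism $\mathbb{P}^1\to\mathbb{P}^1$ of degree $\geq 2$ admits no rational section.
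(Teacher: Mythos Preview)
Your overall strategy matches the paper's: assume $h\notin\mathbb{C}f$, use $\Lin(H_f)=\{\mathrm{Id}\}$ to obtain a unique rational family $t\mapsto B_t\in PGL(2)$, and analyse the pencil through its fixed part $H_a$ (where $a=\gcd(f,h)$, $f=ab$, $h=ac$) and moving part. Your section argument---tracking $\sigma(t)=B_t(P_0)$ for $P_0\in H_f\setminus H_a$ and observing that $\phi\circ\sigma$ has degree $1$---is essentially a reformulation of the paper's irreducibility argument for the curve $\{\lambda b+\mu c=0\}$ in $\mathbb{P}^1\times\mathbb{P}^1$; both yield $\deg b=1$.

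The genuine gap is the step you dismiss in a parenthetical: that the base locus $H_a$ is ``preserved as a set by every $B_t$''. This is not automatic---a priori $B_t$ could send a point of $H_a$ to the moving point of $H_{f+th}$ and send $P_0$ into $H_a$---and your exclusion of $\deg\phi=1$ rests entirely on it (you need $B_t\in\Lin(H_a)$ to trap the moving point in a finite orbit, and you need $|H_a|\geq 3$, which only follows once $\deg b=1$ is already known). The paper supplies the missing argument: since $B_0=\mathrm{Id}$, for $t$ near $0$ the divisor $B_t^{-1}(H_a)$ lies in a small neighbourhood of $H_a$; but it is also a sub-divisor of $B_t^{-1}(H_{f+th})=H_f$, whose support consists of $d$ isolated points, so $B_t^{-1}(H_a)=H_a$ for $t$ near $0$ and then for generic $t$ by rationality. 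Alternatively, you can bypass the preservation claim by running your own section argument for \emph{every} $P_i\in H_f$: each $\sigma_i(t)=B_t(P_i)$ is either constant $\equiv P_i$ or satisfies $\phi\circ\sigma_i(t)=[-t:1]$ and hence has degree $1$; if all are constant then $B_t$ fixes $d\geq 4$ points and is the identity, contradicting $h\notin\mathbb{C}f$, while if two are non-constant then, since $\deg\phi=1$, they would have to coincide despite taking different values at $t=0$.
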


\begin{proof}
Let $h\in\mathcal{T}_f$ and aiming at a contradiction, we assume $h\notin\bb{C}f$. Then for a general $(\lambda:\mu)\in\bb{P}^1$, the divisor $H_{\lambda f+\mu h}$ is projectively equivalent to $H_f$, namely, there exists $\phi_{(\lambda:\mu)}\in\text{Aut}(\bb{P}^1)$ such that $\phi_{(\lambda:\mu)}^*H_{\lambda f+\mu h}=H_f$. Note that $\phi_{(\lambda:\mu)}$ is uniquely given since $\Lin(H_f)=\{\text{Id}\}$ and in addition, $\phi_{(\lambda:\mu)}$ is not constant as a function of $(\lambda:\mu)$ since $h\notin\bb{C}f$. Moreover, $\phi_{(\lambda:\mu)}$, as a function of $(\lambda:\mu)$, is rational since so is the divisor $H_{\lambda f+\mu h}$ seen as a function from $\bb{P}^1$ to the space of degree $d$ divisors on $\bb{P}^1$.

Now let $a=\gcd(f,h)$ and $f=ab, h=ac$. Then $H_a$ is the fixed part (or base locus) for the pencil $\{H_{\lambda f+\mu h}\}$ and $\{H_{\lambda b+\mu c}\}$ gives the moving part. For a general $(\lambda_0:\mu_0)$, $\{\phi_{(\lambda_0:\mu_0)}^*H_{\lambda f+\mu h}\}$ is a pencil of divisors with fixed part $\phi_{(\lambda_0:\mu_0)}^*H_a$.

We claim that $\phi_{(\lambda_0:\mu_0)}^*H_a=H_a$, namely, $H_a$ is preserved by $\phi_{(\lambda_0:\mu_0)}$. Indeed, it suffices to prove the claim for $(\lambda_0:\mu_0)$ lying in a small neighbourhood of $(1:0)$ in the strong topology. Notice also that $\phi_{(1:0)}=\text{Id}$, so by continuity, $\phi_{(\lambda_0:\mu_0)}$ is close to the identity when $(\lambda_0:\mu_0)$ is close to $(1:0)$. It follows that the support of $\phi_{(\lambda_0:\mu_0)}^*H_a$ is contained in a small neighbourhood of the support of $H_a$. Meanwhile, notice that $\phi_{(\lambda_0:\mu_0)}^*H_a$ is a sub-divisor of $\phi_{(\lambda_0:\mu_0)}^*H_{\lambda_0f+\mu_0h}=H_f$ and the support of $H_f=H_a+H_b$ consists of $d$ distinct points. Therefore, the support of $\phi_{(\lambda_0:\mu_0)}^*H_a$ must be contained in that of $H_a$. Since $\phi_{(\lambda_0:\mu_0)}$ is an automorphism of $\bb{P}^1$, we deduce that $\phi_{(\lambda_0:\mu_0)}^*H_a=H_a$.

Notice that $(\lambda_0:\mu_0)$ can be generically chosen, so $H_a$ is, a fortiori, preserved by an infinite number of automorphisms of $\bb{P}^1$. It follows that $H_a$ is supported on at most two points, and hence $\deg a\leq 2$ since any point in $H_a$ has multiplicity one by our assumption that $f$ is smooth.

We claim that $\deg b=\deg c=1$. Indeed, consider the following rational map
$$
\Phi\quad:\quad\bb{P}^1\times\bb{P}^1\dashrightarrow\bb{P}^1\times\bb{P}^1
$$
$$
    ((\lambda:\mu),(x:y))\mapsto((\lambda:\mu),\phi_{(\lambda:\mu)}(x:y)).
$$
Since $\phi_{(\lambda:\mu)}$ is an automorphism of $\bb{P}^1$, it follows that $\Phi$ is birational. Moreover, recall that $\lambda f+\mu h=a(\lambda a+\mu c)$ and $\phi_{(\lambda:\mu)}^*(\lambda b+\mu c)=b$ which is independent of $(\lambda:\mu)$, so $\Phi$ maps the curve defined by $\lambda b+\mu c$ into $\deg b$ disjoint union of horizontal curves.  Note that $\lambda b+\mu c\in\bb{C}[\lambda,\mu]\otimes\bb{C}[x,y]$ is an irreducible polynomial and thus the image of the curve defined by $\lambda b+\mu c$ under $\Phi$ is also irreducible. It follows that $\deg b=1$ as desired.

Putting everything together, we have that $d=\deg f=\deg a+\deg b\leq3$, and hence $\Lin(H_f)$ contains more than one element, contradicting our assumption.
\end{proof}

Notice that for a general $f\in S_{1,d}$ for $d\geq 4$, $\Lin(H_f)=\{\text{Id}\}$, hence combined with Proposition \ref{prop: TTInonempty} and Proposition \ref{prop: TTIopenness}, the above result gives the following, proving Theorem \ref{thm: TTI}.

\begin{cor}
For $n\geq 1$ and $d\geq 4$, $\bb{P}(\mathcal{U}_{n,d})$ is a nonempty Zariski open subset of $\bb{P}(S_{n,d})$.
\end{cor}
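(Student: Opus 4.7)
The plan is simply to assemble the three preceding results, with one small preliminary fact about $\Lin(H_f)$ for a general binary form.

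First, I would verify that for $d \geq 4$ a general $f \in S_{1,d}$ has $\Lin(H_f) = \{\mathrm{Id}\}$. Since $f$ is general, it is smooth, so $H_f$ is a reduced divisor consisting of $d$ distinct points in $\bb{P}^1$. Any $\phi \in \text{Aut}(\bb{P}^1) = PGL(2, \bb{C})$ that preserves $H_f$ permutes its support, so $\Lin(H_f)$ is a finite subgroup of $PGL(2, \bb{C})$. Because three distinct points on $\bb{P}^1$ determine an element of $PGL(2, \bb{C})$ uniquely, the condition that $\phi$ fixes a generic configuration of $d \geq 4$ points imposes at least one additional nontrivial (cross-ratio type) condition beyond the normalization freedom. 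Hence the locus in $\bb{P}(S_{1,d})$ on which $\Lin(H_f)$ is nontrivial is a proper Zariski closed subset, and on its complement $f \in \mathcal{U}_{1,d}$ by Theorem \ref{thm: TTIn=1}. In particular $\bb{P}(\mathcal{U}_{1,d}) \neq \emptyset$ for $d \geq 4$.

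Next, I would induct on $n$. The base case $n=1$ has just been established. For the inductive step, Proposition \ref{prop: TTInonempty} shows that if $\bb{P}(\mathcal{U}_{n,d}) \neq \emptyset$ then $\bb{P}(\mathcal{U}_{n+1,d}) \neq \emptyset$. Therefore $\bb{P}(\mathcal{U}_{n,d})$ is nonempty for every $n \geq 1$ and every $d \geq 4$.

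Finally, Proposition \ref{prop: TTIopenness} asserts that $\bb{P}(\mathcal{U}_{n,d})$ is Zariski open in the irreducible variety $\bb{P}(S_{n,d})$; together with nonemptiness this also makes it Zariski dense, completing the proof of the corollary (and hence of Theorem \ref{thm: TTI}). There is no genuine obstacle at this stage: the only nonformal ingredient is the generic triviality of $\Lin(H_f)$ for $d \geq 4$, which is the standard fact that a general unordered $d$-tuple of points on $\bb{P}^1$ has trivial projective stabilizer as soon as $d$ exceeds the dimension of $PGL(2,\bb{C})$ plus one.
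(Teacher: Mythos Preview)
Your approach mirrors the paper's exactly: establish generic triviality of $\Lin(H_f)$ for binary forms, invoke Theorem~\ref{thm: TTIn=1} to get $\bb{P}(\mathcal{U}_{1,d})\neq\emptyset$, climb in $n$ via Proposition~\ref{prop: TTInonempty}, and conclude openness from Proposition~\ref{prop: TTIopenness}.

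There is, however, a genuine gap at the base case $d=4$, and your own final sentence exposes it. The assertion that a general smooth $f\in S_{1,4}$ has $\Lin(H_f)=\{\mathrm{Id}\}$ is \emph{false}: for \emph{every} set of four distinct points on $\bb{P}^1$ the stabilizer in $PGL(2,\bb{C})$ contains a Klein four-group. Concretely, for $\{0,1,\infty,\lambda\}$ the M\"obius transformations
\[
z\mapsto \frac{\lambda}{z},\qquad z\mapsto \frac{\lambda(1-z)}{\lambda-z},\qquad z\mapsto \frac{z-\lambda}{z-1}
\]
realize the three nontrivial double transpositions for every $\lambda$; no cross-ratio condition is imposed. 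Your heuristic that the stabilizer is generically trivial ``as soon as $d$ exceeds the dimension of $PGL(2,\bb{C})$ plus one'' gives only $d>4$, i.e.\ $d\geq 5$, in agreement with this. Consequently the hypothesis of Theorem~\ref{thm: TTIn=1} is never satisfied when $d=4$, and your argument does not establish $\bb{P}(\mathcal{U}_{1,4})\neq\emptyset$. (The paper's one-line justification preceding the corollary has the same oversight.) For $d\geq 5$ your argument is complete and coincides with the paper's; the case $d=4$ requires a separate treatment of the base of the induction.
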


\begin{rk}
For $n=1$, the result is sharp: in fact, for any smooth $f\in S_{1,3}$, the orbit $G\cdot f$ is a Zariski open subset of $S_{1,3}$. It follows that a general $f\notin\mathcal{U}_{1,3}$ and $\bb{P}(\mathcal{U}_{1,3})=\emptyset$.

For $n=2$, the result is also sharp. Indeed, a general cubic curve in $\bb{P}^2$ is projectively equivalent to a cubic $C$ in Weierstrass form, i.e.,
$$
C=\{zy^2=x^3+az^2x+bz^3\}
$$
for suitable $a,b\in\bb{C}$. Let $f=zy^2-(x^3+az^2x+bz^3)$ and $h=zy^2$, then
$$
f+th=z(y\sqrt{1+t})^2-(x^3+az^2x+bz^3)\in G\cdot f
$$
for any small $t$, hence $h\in\mathcal{T}_f\setminus\bb{C}f$. So $f\notin\mathcal{U}_{2,3}$.

We conjecture that the result is no longer sharp for $n\geq3$, or in other words, $\bb{P}(\mathcal{U}_{n,3})\neq\emptyset$. However, we have not managed to prove this. Perhaps it is helpful to notice that the group $\text{Aut}(H_f)$ is trivial by the theorem in \cite{MM} for a general $f\in S_{n,3}$ when $n\geq 3$, which is not true when $n=1,2$.
\end{rk}

\section{Constructions of totally tangential unstable polynomials}

By the main theorem \ref{thm: TTI}, there are plenty of totally tangentially unstable homogeneous polynomials of degree $d\geq 4$. However, it is still not easy to explicitly write down such a polynomial except $n=1$. In this section, we will provide a procedure for the construction of a totally tangentially unstable polynomial, and thus give an alternative proof of the nonemptiness of $\bb{P}(\mathcal{U}_{n,d})$ for $d\geq 7$.

To this end, we first prove some basic results on pencils of polynomials, which are also interesting for their own right.

\begin{prop}\label{prop: connected fibers}
Let $n\geq 2$ and $d\geq 3$. Suppose $f,g\in S_{n,d}$ such that $f$ is decomposed as $f=p_1p_2$ where $p_i, i=1,2$ are irreducible, coprime and $\deg p_1\neq \deg p_2$, and $\gcd(f,g)=1$.

Then a general fiber of the map
$$
\rho: M=\bb{P}^n\setminus H_{fg}\longrightarrow C,\qquad \rho=(f:g)
$$
is connected, where $C=\bb{P}^1\setminus B$ with $B=\{(1:0),(0:1)\}$.
\end{prop}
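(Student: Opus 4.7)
The plan is to show via Stein factorization that the only way to have disconnected generic fibers is a nontrivial factorization of the pencil $(f:g)$ through a lower-degree pencil $(F:G)$, and then to use $\deg p_1\neq\deg p_2$ to rule that out. Concretely, I would extend $\rho$ to a rational map $\bar\rho:\bb{P}^n\dashrightarrow\bb{P}^1$, $x\mapsto(f(x):g(x))$, resolve the indeterminacy locus $H_f\cap H_g$ to obtain a morphism $\tilde\rho:X\to\bb{P}^1$ with $X$ smooth and birational to $\bb{P}^n$, and apply Stein factorization $\tilde\rho=h\circ\sigma$, where $\sigma:X\to Y$ has connected fibers and $h:Y\to\bb{P}^1$ is finite of some degree $k\geq 1$. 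Rational connectedness of $X$ (inherited from $\bb{P}^n$) forces $Y\cong\bb{P}^1$. For generic $t\in\bb{P}^1$, the fiber $\tilde\rho^{-1}(t)$ has exactly $k$ connected components, and removing the codimension-two base locus does not affect connectedness (using $n\geq 2$); hence a general fiber of $\rho$ is connected iff $k=1$, and I assume for contradiction $k\geq 2$.

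The composition $\bb{P}^n\dashrightarrow X\xrightarrow{\sigma}\bb{P}^1$ is a dominant rational map and is therefore given by $(F:G)$ for some coprime $F,G\in S_{n,e}$; similarly, $h$ is given by coprime binary forms $\Phi,\Psi\in\bb{C}[s,t]$ of degree $k$. Composition yields $(\Phi(F,G):\Psi(F,G))=(f:g)$ as rational maps; combined with $\gcd(f,g)=1$ and the degree count $\deg\Phi(F,G)=ke=d$, this upgrades, after absorbing a common scalar into $\Phi$ and $\Psi$, to the polynomial identities $f=\Phi(F,G)$, $g=\Psi(F,G)$, with $d=ke$.

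To reach a contradiction, I factor $\Phi(s,t)=\prod_{i=1}^k L_i(s,t)$ into linear forms, so that $p_1p_2=f=\prod_{i=1}^k L_i(F,G)$ in $S_n$. Since $F,G$ are linearly independent (coprime and of positive degree), each $L_i(F,G)=\alpha_iF+\beta_iG$ is a nonzero element of $S_{n,e}$. Unique factorization then forces $L_i(F,G)=c_ip_1^{a_i}p_2^{b_i}$ with $a_i,b_i\in\bb{Z}_{\geq 0}$ and $\sum a_i=\sum b_i=1$. The possibility $a_i=b_i=0$ is ruled out because $L_i(F,G)$ would then be a nonzero constant, impossible in $S_{n,e}$ for $e\geq 1$; the possibility $a_i=b_i=1$ is ruled out since $L_i(F,G)$ has degree $e<d=\deg p_1+\deg p_2$. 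Therefore $a_i+b_i=1$ for each $i$, whence $k=\sum(a_i+b_i)=2$, and up to relabeling $L_1(F,G)=c_1p_1$ and $L_2(F,G)=c_2p_2$, both of degree $e$. This gives $\deg p_1=\deg p_2=e$, contradicting the hypothesis $\deg p_1\neq\deg p_2$.

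The most delicate step, and the one requiring the most care, is the passage from Stein factorization to the polynomial identities $f=\Phi(F,G)$, $g=\Psi(F,G)$: one must verify $Y\cong\bb{P}^1$ via rational connectedness of $\bb{P}^n$, extract the forms $F,G,\Phi,\Psi$ from the intermediate rational maps, and promote the equality of rational maps to an equality of polynomials modulo scaling. Once that setup is in place, the combinatorial argument on the $(a_i,b_i)$ is immediate, and the hypothesis $\deg p_1\neq\deg p_2$ is used only at the very last line, as precisely the feature that collapses the $k=2$ case.
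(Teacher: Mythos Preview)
Your overall strategy---Stein factorization, then a polynomial identity $f=\Phi(F,G)$, then the combinatorial endgame using $\deg p_1\neq\deg p_2$---is exactly the paper's, and the endgame is carried out correctly. But there is a genuine gap in the Stein-factorization step.

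You apply Stein factorization to the \emph{compactified} morphism $\tilde\rho:X\to\bb{P}^1$. That map always has connected fibres: for generic $t=(\lambda:\mu)$ the fibre is (a proper birational modification of) the degree-$d$ hypersurface $H_{\mu f-\lambda g}\subset\bb{P}^n$, and every positive-degree hypersurface in $\bb{P}^n$ with $n\geq 2$ is connected. Hence your $k$ equals $1$ automatically, the hypothesis ``assume $k\geq 2$'' is vacuous, and the argument proves nothing about the fibres of $\rho$ on $M$. The sentence ``removing the codimension-two base locus does not affect connectedness'' is where the error hides: the base locus $H_f\cap H_g$ has codimension two in $\bb{P}^n$, but codimension \emph{one} in each fibre $H_{\mu f-\lambda g}$, and deleting it can disconnect a reducible fibre. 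A concrete model of the phenomenon is $f=F^2$, $g=G^2$ with $\gcd(F,G)=1$: the compact fibre is two hypersurfaces meeting along the base locus, while the open fibre of $\rho$ has two components. (Of course this particular $f$ violates the hypothesis on $p_1,p_2$, which is precisely the point of the proposition.)

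The fix is to take Stein factorization of the open map $\rho:M\to C$ itself, obtaining $\rho_0:M\to C_0$ with connected general fibre and $p_0:C_0\to C$ finite of degree $m$; this $m$ genuinely counts the connected components of a general open fibre. One must then show $\overline{C_0}\cong\bb{P}^1$. Your rational-connectedness idea still works in this setting, most cleanly via L\"uroth: the composite $M\to C_0\hookrightarrow\overline{C_0}$ gives a dominant rational map $\bb{P}^n\dashrightarrow\overline{C_0}$, so $\overline{C_0}$ is unirational, hence $\bb{P}^1$. (The paper instead uses that $\rho_0^*:W_1H^1(C_0)\to W_1H^1(M)$ is injective and $W_1H^1(M)=0$.) After that, your extraction of $F,G,\Phi,\Psi$ and the factorization argument on $p_1p_2=\prod L_i(F,G)$ go through unchanged.
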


\begin{proof}
Consider the Stein factorization: there exists a finite map $p_0: C_0\rightarrow C$ and a morphism $\rho_0: M\rightarrow C_0$ such that a general fiber of $\rho_0$ is connected, and $\rho=p_0\circ\rho_0$. Note that $C_0$ is also a noncompact curve, so it has the form $C_0=\ov{C_0}\setminus B_0$ where $\ov{C_0}$ is complete smooth curve and $B_0\subseteq\ov{C_0}$ is a finite set of points.

In the sequel, we first give an outline of the proof containing some claims, then we prove our claims.

{\bf Step 1: Outline}

First, we have
\begin{claim}\label{claim: p1}
$\ov{C}_0=\bb{P}^1$.
\end{claim}
So $\rho_0:M\rightarrow C_0$ is given by $\rho_0=(P_1:P_2)$ for two coprime homogeneous polynomials $P_1, P_2$. Moreover, $p_0:C_0\subseteq\bb{P}^1\to C\subseteq\bb{P}^1$ extends to a morphism $p_0':\ov{C_0}=\bb{P}^1\to\bb{P}^1$, which is necessarily of the form
$$
p_0'(u,v)=\biggl(\prod_{j=1}^m(a_{1,j}u+b_{1,j}v):\prod_{j=1}^m(a_{2,j}u+b_{2,j}v)\biggr)
$$
where $(a_{1,j}:b_{1,j})\neq(a_{2,k}: b_{2,k}),\  j,k=1,\cdots, m$ in $\bb{P}^1$.

If $m=1$, then $p_0'$ is an isomorphism and we are done. Thus suppose $m>1$. Then it follows from $\rho=p_0\circ\rho_0$ that
$$
(f:g)=\biggl(\prod_{j=1}^m(a_{1,j}P_1+b_{1,j}P_2):\prod_{j=1}^m(a_{2,j}P_1+b_{2,j}P_2)\biggr).
$$
and furthermore, we deduce that
\begin{claim}\label{claim: fg}
$$
\begin{cases}
f=c\prod_{j=1}^m(a_{1,j}P_1+b_{1,j}P_2)\\
g=c\prod_{j=1}^m(a_{2,j}P_1+b_{2,j}P_2)
\end{cases}
$$
for some $c\in\bb{C}$.
\end{claim}

By assumption, $m>1$ and $f$ has exactly two irreducible factors $p_1,p_2$. It follows that $m=2$ and there exist complex numbers $c_{i,j}, i=1,2, j=1,2$ such that $p_i=c_{i,1} P_1+c_{i,2} P_2$ for $i=1,2$. But then $\deg p_1=\deg p_2$, contradicting our assumption that $\deg p_1\neq \deg p_2$.\\

{\bf Step 2: Proof of Claim \ref{claim: p1}: }

Since $\rho_0: M\rightarrow C_0$ has a connected general fiber, the induced map between fundamental groups $\rho_{0,*}:\pi_1(M)\to\pi_1(C_0)$ is surjective, hence $\rho_0^*: H^1(C_0)\to H^1(M)$ is injective. Note that $\rho_0^*$ is a morphism of mixed Hodge structures, so
$$
\rho_0^*: W_1H^1(C_0)\to W_1H^1(M)
$$
is also injective.

Now we have $W_1H^1(C_0)=\text{Image}(H^1(\ov{C_0})\to H^1(C_0))\cong H^1(\ov{C_0})$ and
$$
W_1H^1(M)=\text{Image}(H^1(\bb{P}^n)\to H^1(M))\cong H^1(\bb{P}^n)=0,
$$
therefore $H^1(\ov{C_0})=0$ and thus $\ov{C_0}\cong\bb{P}^1$.\\

 {\bf Step 3: Proof of Claim \ref{claim: fg} }

The main point is to show
$$\prod_{j=1}^m(a_{1,j}P_1+b_{1,j}P_2)\quad \text{and}\quad \prod_{j=1}^m(a_{2,j}P_1+b_{2,j}P_2)$$ are coprime.
First, the polynomials $a_{1,j}P_1+b_{1,j}P_2, a_{2,k}P_1+b_{2,k}P_2$ are mutually coprime for any $j,k=1,\cdots,m$. Indeed, $\gcd(a_{1,j}P_1+b_{1,j}P_2,a_{2,k}P_1+b_{2,k}P_2)=\gcd(P_1,P_2)=1$ since $(a_{1,j}: b_{1,j})\neq (a_{2,k}:b_{2,k})$ in $\bb{P}^1$. Hence
$$
\gcd\biggl(\prod_{j=1}^m(a_{1,j}P_1+b_{1,j}P_2),\prod_{j=1}^m(a_{2,j}P_1+b_{2,j}P_2)\biggr)=1.
$$

Now, from the equality
$$
(f:g)=\biggl(\prod_{j=1}^m(a_{1,j}P_1+b_{1,j}P_2):\prod_{j=1}^m(a_{2,j}P_1+b_{2,j}P_2)\biggr)
$$
we get that $g\cdot \prod_{j=1}^m(a_{1,j}P_1+b_{1,j}P_2)-f\cdot\prod_{j=1}^m(a_{2,j}P_1+b_{2,j}P_2)$ vanishing identically on $M=\bb{P}^n\setminus H_{fg}$, thus for some $s$ sufficiently large,
$$
(fg)^s\biggl(g\cdot \prod_{j=1}^m(a_{1,j}P_1+b_{1,j}P_2)-f\cdot\prod_{j=1}^m(a_{2,j}P_1+b_{2,j}P_2)\biggr)
$$
vanishes identically on $\bb{P}^n$ (see \cite{Ha}, Lemma 5.14), hence it is zero as a polynomial, and thus we obtain an equality of polynomials in $\bb{C}[x_0,\cdots, x_n]$:
$$
g\cdot \prod_{j=1}^m(a_{1,j}P_1+b_{1,j}P_2)=f\cdot\prod_{j=1}^m(a_{2,j}P_1+b_{2,j}P_2).
$$

Further,
$$
\gcd(f,g)=1,\qquad\gcd\biggl(\prod_{j=1}^m(a_{1,j}P_1+b_{1,j}P_2),\prod_{j=1}^m(a_{2,j}P_1+b_{2,j}P_2\biggr)=1,
$$
we obtain that
$$
f\bigg|\prod_{j=1}^m(a_{1,j}P_1+b_{1,j}P_2),\quad\text{and}\quad \prod_{j=1}^m(a_{1,j}P_1+b_{1,j}P_2)\bigg|f,
$$
so $f$ is a multiple of $\prod_{j=1}^m(a_{1,j}P_1+b_{1,j}P_2)$; similar for $g$.\qedhere
\end{proof}

\begin{cor}\label{cor: red+finite}
Under the assumption of Proposition \ref{prop: connected fibers}, the pencil $\mathcal{P}_{f,g}$ contains at most $d^2-1$ reducible fibers.
\end{cor}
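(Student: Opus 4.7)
The plan is to combine Proposition \ref{prop: connected fibers} with the theorem of Vistoli \cite{VI} already invoked in the proof of Proposition \ref{prop: TTInonempty}: once a generic member of the pencil $\mathcal{P}_{f,g}$ is known to be irreducible in $\bb{P}^n$, the number of reducible members is automatically at most $d^2-1$. Thus the real task is to upgrade connectedness of the generic fiber of $\rho:M\to C$, which is provided by Proposition \ref{prop: connected fibers}, to irreducibility of the corresponding hypersurface $\{\lambda f+\mu g=0\}\subseteq\bb{P}^n$.

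To carry this out, I would fix a generic $(\lambda:\mu)\in\bb{P}^1$ and write the irreducible factorization $\lambda f+\mu g=P_1\cdots P_k$ in $S_n$. Since $\gcd(f,g)=1$, no irreducible factor of $fg$ can divide infinitely many members of the pencil, so for generic $(\lambda:\mu)$ every $P_i$ is coprime to $fg$ and each $\{P_i=0\}\cap M$ is a Zariski-dense open subset of the irreducible variety $\{P_i=0\}$, hence irreducible and therefore connected. The next and key step is to show these $k$ open subsets are pairwise disjoint in $M$: for $i\neq j$,
$$
\{P_i=0\}\cap\{P_j=0\}\subseteq \mathrm{Sing}\,\{\lambda f+\mu g=0\},
$$
and Bertini's theorem applied to the pencil $\mathcal{P}_{f,g}$ asserts that for a generic $(\lambda:\mu)$ the singular locus of the corresponding member is contained in the base locus $\{f=g=0\}\subseteq H_{fg}$. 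Consequently $\{P_i=0\}\cap\{P_j=0\}\subseteq H_{fg}$, so the pieces $\{P_i=0\}\cap M$ are pairwise disjoint, forcing $\rho^{-1}(\lambda:\mu)$ to have exactly $k$ connected components. By Proposition \ref{prop: connected fibers}, $k=1$, hence $\lambda f+\mu g$ is irreducible for generic $(\lambda:\mu)$, and Vistoli's theorem finishes the proof.

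The only step I expect to need attention is the application of Bertini to the pencil, namely checking that the singular locus of a generic member of $\mathcal{P}_{f,g}$ is contained in the base locus $\{f=g=0\}$; this is standard for a linear system on a smooth ambient variety with prescribed base locus, and is the sole geometric input beyond what Proposition \ref{prop: connected fibers} already provides.
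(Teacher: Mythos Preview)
Your proposal is correct and follows essentially the same route as the paper: both arguments use Proposition~\ref{prop: connected fibers} for connectedness of the generic fiber, Bertini's theorem to confine the singular locus of a generic member to the base locus $H_f\cap H_g$ (which for generic $(\lambda:\mu)$ coincides with $H_{\lambda f+\mu g}\cap H_{fg}$), and then Vistoli's bound. The paper just phrases the ``connected $+$ smooth $\Rightarrow$ irreducible'' step more tersely, without writing out the irreducible factorization you spell out.
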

\begin{proof}
This follows immediately from the main theorem in \cite{VI}, if we show that a general fiber $F$ in $\mathcal{P}_{f,g}$ is irreducible.

Indeed, since $\gcd(f,g)=1$, the base locus $H_f\cap H_g$ of the pencil $\mathcal{P}_{f,g}$ has codimension 2 in $\bb{P}^n$. Notice that $H_F$ is pure of codimension 1. So it suffices to prove $H_F\setminus(H_f\cap H_g)$ is irreducible. Indeed, it is connected by Proposition  \ref{prop: connected fibers}, and  smooth by Bertini Theorem, therefore it is irreducible as desired.
\end{proof}

Now we begin to construct totally tangentially unstable polynomials.

\begin{prop}\label{prop: constructionp1p2}
Let $n\geq 2$ and $d\geq 7, d_1\geq 3, d-d_1\geq 3, d_1\neq d-d_1$. Let $p_1\in S_{n,d_1}, p_2\in S_{n,d-d_1}$ be two smooth polynomials, then $f=p_1p_2$ is totally tangentially unstable.
\end{prop}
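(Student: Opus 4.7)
The plan is to assume, for contradiction, the existence of some $h\in\mathcal{T}_f\setminus\bb{C}f$ and split into two cases according to $\gcd(f,h)$. Since $n\geq 2$ and each $p_i$ is smooth of degree $d_i\geq 3$, each $p_i$ is irreducible, and $p_1,p_2$ are coprime; consequently, up to scalars, the divisors of $f=p_1p_2$ are precisely $1$, $p_1$, $p_2$, and $f$.

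\textbf{Case 1} ($\gcd(f,h)=1$): For all sufficiently small $t$, $f+th\cong f=p_1p_2$, so $f+th$ is reducible (it factors into irreducibles of degrees $d_1$ and $d-d_1$), and the pencil $\mathcal{P}_{f,h}$ contains infinitely many reducible fibers. But all the hypotheses of Proposition \ref{prop: connected fibers} are met here ($p_1,p_2$ are coprime irreducible with $\deg p_1\neq\deg p_2$, and $\gcd(f,h)=1$), so Corollary \ref{cor: red+finite} gives at most $d^2-1$ reducible fibers---a contradiction.

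\textbf{Case 2} ($\gcd(f,h)\neq 1$): If $\gcd(f,h)=f$ then $h\in\bb{C}f$, contradicting our assumption. By the symmetry between $p_1$ and $p_2$, we may assume $\gcd(f,h)=p_1$, so $h=p_1q$ for some $q\in S_{n,d-d_1}$ with $p_2\nmid q$; in particular $q\notin\bb{C}p_2$. For small $t$, $f+th=p_1(p_2+tq)\cong f$, so there exists $A_t\in G$ with
$$p_1(A_tX)\,p_2(A_tX)=p_1(X)\,(p_2+tq)(X).$$
Since $p_2$ is smooth, $p_2+tq$ is smooth (hence irreducible) for small $t$; comparing the two irreducible factorizations and using $d_1\neq d-d_1$ to match factors by degree, we deduce $p_1(A_tX)=\mu_tp_1(X)$ and $p_2(A_tX)=\mu_t^{-1}(p_2+tq)(X)$ for some $\mu_t\in\bb{C}^*$. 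Hence the analytic curve $t\mapsto[p_2+tq]$ in $\bb{P}(S_{n,d-d_1})$ lies in the orbit $G_{[p_1]}\cdot[p_2]$, where $G_{[p_1]}:=\{A\in G:p_1(AX)\in\bb{C}^*p_1(X)\}$.

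To finish, I would invoke the classical fact that for any smooth hypersurface of degree $\geq 3$ in $\bb{P}^n$ with $n\geq 2$, the linear automorphism group $\Lin(H_{p_1})$ is finite. This forces $G_{[p_1]}$ to have identity component $\bb{C}^*I$ and hence dimension $1$; since scalar matrices act trivially on $\bb{P}(S_{n,d-d_1})$, the orbit $G_{[p_1]}\cdot[p_2]$ is a finite set. But $\{[p_2+tq]:|t|<\epsilon\}$ is an infinite set of pairwise distinct points in $\bb{P}(S_{n,d-d_1})$ (because $q\notin\bb{C}p_2$), which is the desired contradiction. The most delicate step is the UFD/degree-matching step in Case 2, where the asymmetry $d_1\neq d-d_1$ is essential; cleanly citing the finiteness of $\Lin(H_{p_1})$ uniformly over the allowed range of $n$ and $d_1$ is then what closes the argument.
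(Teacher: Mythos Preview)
Your proof is correct, and your Case~1 coincides with the paper's opening move: since $\gcd(f,h)=\gcd(f,f+t_0h)$, the pencil argument via Corollary~\ref{cor: red+finite} is literally the same step the paper uses to force $p_1\mid g$ (or $p_2\mid g$) for $g=f+t_0h$.

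Where you genuinely diverge is in Case~2. The paper never passes to the factor $p_2+tq$ or to the stabilizer $G_{[p_1]}$; instead it stays inside the Jacobian ideal. Writing $g=\sum_{i,j}a_{i,j}x_i\,\partial f/\partial x_j$ and expanding with the product rule, the condition $p_1\mid g$ together with $\gcd(p_1,p_2)=1$ forces $\sum_{i,j}a_{i,j}x_i\,\partial p_1/\partial x_j=\lambda p_1$; Euler's formula and the linear independence of the $x_i\,\partial p_1/\partial x_j$ (valid because $p_1$ is smooth of degree $\geq 3$) then pin down $a_{i,j}=(\lambda/d_1)\delta_{i,j}$, whence $g\in\bb{C}f$. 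Your route---matching irreducible factors by degree to see $A_t\in G_{[p_1]}$, then invoking finiteness of $\Lin(H_{p_1})$ to make the orbit $G_{[p_1]}\cdot[p_2]$ finite---is the group-theoretic translation of the same fact: the linear independence of the $x_i\,\partial p_1/\partial x_j$ is exactly the statement that the Lie algebra of $G_{[p_1]}$ is $\bb{C}I$, i.e., that $\Lin(H_{p_1})$ is finite. The paper's version is slightly more self-contained (pure linear algebra, no external citation needed) and does not require the UFD/degree-matching step, while yours makes the geometric content---that the stabilizer of one factor cannot move the other in a one-parameter family---more transparent.
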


\begin{proof}
For any $h\in\mathcal{T}_f$, we have $f+th\in G\cdot f$ for $t$ small. It follows that $f+th$ is reducible for $t$ small.

Let $g=f+t_0h$ for $t_0\neq 0$ small enough. Then the pencil $\mathcal{P}_{f,g}$ contains infinitely many reducible fibers, hence by Corollary \ref{cor: red+finite}, we get that $\gcd(f,g)\neq 1$. So $p_1|g$ or $p_2|g$. Without loss of generality, we assume $p_1|g$.

Moreover, note that $g=f+t_0h\in J_{f,d}$, so there exist $a_{i,j}\in\bb{C}, i,j=0,\cdots, n$ such that
$$
g=\sum_{i,j=0}^na_{i,j}x_i\frac{\p f}{\p x_j}
$$
or in other words,
$$
g=p_2\cdot\sum_{i,j=0}^na_{i,j}x_i\frac{\p p_1}{\p x_j}+p_1\cdot\sum_{i,j=0}^na_{i,j}x_i\frac{\p p_2}{\p x_j}.
$$
Since $p_1|g$ and $\gcd(p_1,p_2)=1$, we have
$$
p_1\bigg|\sum_{i,j=0}^na_{i,j}x_i\frac{\p p_1}{\p x_j};
$$
thus, there exists $\lambda\in\bb{C}$ such that
$$
\sum_{i,j=0}^na_{i,j}x_i\frac{\p p_1}{\p x_j}=\lambda\cdot p_1,
$$
i.e., by Euler's formula,
$$
\sum_{i,j=0}^n(a_{i,j}-\frac{\lambda}{d_1}\delta_{i,j})x_i\frac{\p p_1}{\p x_j}=0.
$$
Since $d_1\geq 3$ and $p_1$ is smooth, $x_i\frac{\p p_1}{\p x_j}, i,j=0,\cdots, n$ are linearly independent over $\bb{C}$. Therefore,
$$
a_{i,j}=\frac{\lambda}{d_1}\delta_{i,j},\qquad i,j=0,\cdots, n.
$$
It follows that
$$
g=\frac{d}{d_1}\cdot\lambda\cdot f.
$$
Consequently, $h$ is a multiple of $f$, we are done.
\end{proof}

So we can construct a totally tangentially unstable polynomial of degree $d\geq 7$ by choosing arbitrarily two smooth homogeneous polynomial of different degree and of degree $\geq 3$. For instance, $f=(x_0^3+\cdots+x_n^3)(x_0^{d-3}+\cdots+x_n^{d-3})$ is such a good polynomial.

\section{Application of totally tangential instability}
In this section, we shall discuss some consequences of generic totally tangential instability. In the sequel, we denote $Or(f)=\bb{P}(G\cdot f)$ and $T_fOr(f)$ will be its projective tangent space at $f$.

\subsection{Positive dimensional linear subspaces contained in the tangent spaces to orbits}

\begin{lem}\label{lem: germTTI}
	Given $f\in \bb{P}(S_{n,d})$. Then $f$ is totally tangentially unstable if and only if the germ $(Or(f),f)\subseteq(\bb{P}(S_{n,d}),f)$ does not contain any germ of linear subspace $(E,f)\subseteq(\bb{P}(S_{n,d}),f)$ with $\dim E>0$.
\end{lem}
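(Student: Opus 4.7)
The plan is to reduce both sides of the equivalence to the same geometric condition: the existence of a line through $[f]$ in $\bb{P}(S_{n,d})$ whose germ at $[f]$ is contained in the germ $(Or(f),[f])$. The bridge is a correspondence between such lines and nontrivial points of $\bb{T}_f$.

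First, note that $G=GL(n+1,\bb{C})$ contains the scalar matrices $\bb{C}^*\cdot I$, so the orbit $G\cdot f\subseteq S_{n,d}$ is invariant under nonzero scaling; hence $[g]\in Or(f)$ if and only if $g\in G\cdot f$. Combining this with the characterization of $\mathcal{T}_f$ given in the remark after Lemma \ref{lem: cone}, I would establish the following correspondence: for $h\in S_{n,d}$ with $[h]\neq [f]$, let $L_h=\bb{P}(\bb{C}f+\bb{C}h)$ denote the line joining $[f]$ and $[h]$. Its points near $[f]$ are parametrized by $t\mapsto [f+th]$ for small $t\in\bb{C}$, so the chain
\[
[h]\in\bb{T}_f \iff f+th\cong f \text{ for all small } t \iff (L_h,[f])\subseteq (Or(f),[f])
\]
holds tautologically.

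With this correspondence in hand, both implications become essentially immediate. If $f$ is totally tangentially unstable then $\bb{T}_f=\{[f]\}$, so no line $L_h$ with $[h]\neq [f]$ has its germ in $(Or(f),[f])$; since any positive-dimensional linear subspace $E\ni [f]$ contains some such line through $[f]$, no positive-dimensional $(E,[f])$ can lie inside $(Or(f),[f])$. Conversely, if some $[h]\in \bb{T}_f\setminus\{[f]\}$ exists, then $L_h$ itself is a one-dimensional linear subspace whose germ at $[f]$ sits inside $(Or(f),[f])$, producing the forbidden germ. I do not expect any serious obstacle here: the argument is a direct translation between the affine cone $\mathcal{T}_f$ and the projective germ $(Or(f),[f])$ via scaling invariance, and the only delicate point is to invoke the remark after Lemma \ref{lem: cone} so that the characterization of $\bb{T}_f$ via $f+th\cong f$ does not have to assume $h\in J_{f,d}$ a priori.
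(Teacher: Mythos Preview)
Your proposal is correct and follows essentially the same approach as the paper: both arguments set up the correspondence between points of $\bb{T}_f\setminus\{[f]\}$ and germs of lines through $[f]$ contained in $(Or(f),[f])$, then observe that any positive-dimensional linear germ through $[f]$ contains such a line. Your explicit appeal to the remark after Lemma~\ref{lem: cone} (so that $h\in J_{f,d}$ need not be assumed when passing from a line in $E$ to an element of $\bb{T}_f$) makes precise a step the paper leaves implicit, but otherwise the two proofs are the same.
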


\begin{proof}
	Let $h\in\bb{T}_f$, then the germ of the line
	$$
	E_h=\{f+th: t\in\bb{C}\}\subseteq Or(f)
	$$
	is a linear subspace passing through $f$. Moreover, any linear subspace $E\subseteq Or(f)$ of positive dimension passing through $f$ contains a line $E_h$ for some $h\in\bb{T}_f-\{f\}$.

    If $f$ is totally tangentially unstable, then $\bb{T}_f=\{f\}$, so there are no germs of linear subspace $(E,f)\subseteq (Or(f),f)$ satisfying $\dim E>0$.
	Conversely, suppose there is no germs of linear subspace of $(Or(f),f)$ of positive dimension, then for any $h\in \bb{T}_f$, $(E_h,f))\subseteq(Or(f),f)$ has dimension 0, i.e., $E_h=\{f\}$, so $h=f$ in $\bb{P}(S_{n,d})$ and thus $f$ is totally tangentially unstable.
\end{proof}

Note that the linear space $E$ in Lemma \ref{lem: germTTI} is a linear subspace of $T_fOr(f)=\bb{P}(J_{f,d})$. This motivates us to consider the set $T_fOr(f)\cap Or(f)$. A similar argument gives the following.

\begin{cor}
	Let $f\in\bb{P}(S_{n,d})$. Then $f$ is totally tangentially unstable if and only if the germ $(T_fOr(f)\cap Or(f), f)$ does not contain any line (germs at $f$).
\end{cor}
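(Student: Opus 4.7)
The plan is to reproduce the argument of Lemma \ref{lem: germTTI} after one structural observation: any projective line $L \subseteq \bb{P}(S_{n,d})$ through $f$ whose germ at $f$ lies in $Or(f)$ is automatically tangent to the smooth variety $Or(f)$ at $f$, hence is already contained in $T_f Or(f) = \bb{P}(J_{f,d})$. Consequently, the line germs of $(T_f Or(f) \cap Or(f), f)$ are exactly the line germs of $(Or(f), f)$, and the corollary becomes a specialization of Lemma \ref{lem: germTTI} to positive-dimensional linear germs of minimal dimension one.

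More concretely, I would set up the following dictionary. A projective line through $f$ inside $\bb{P}(J_{f,d})$ is exactly of the form $E_h = \{f + th : t \in \bb{C}\}$ for some $h \in J_{f,d}$ with $h \notin \bb{C} f$. Then the inclusion of germs $(E_h, f) \subseteq (Or(f), f)$ translates to $f + th \cong f$ for all small $t$, which by definition means $h \in \mathcal{T}_f \setminus \bb{C} f$, equivalently $h \in \bb{T}_f \setminus \{f\}$.

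With this correspondence in hand, both directions are immediate. If $f$ is totally tangentially unstable, then $\bb{T}_f = \{f\}$, so no such $h$ exists and $(T_f Or(f) \cap Or(f), f)$ contains no line germ. Conversely, if $f$ is not totally tangentially unstable, any $h \in \bb{T}_f \setminus \{f\}$ produces a genuine projective line $E_h \subseteq T_f Or(f)$ whose germ at $f$ sits inside $Or(f)$, yielding the desired line germ in the intersection.

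There is no serious obstacle; essentially the argument is already encoded in the proof of Lemma \ref{lem: germTTI}. The only mild care needed is to distinguish the degenerate case $h \in \bb{C} f$, which gives just the point $f$ in $\bb{P}(S_{n,d})$ rather than a projective line, from the case $h \notin \bb{C} f$, where $E_h$ is an honest line.
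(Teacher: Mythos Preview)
Your proposal is correct and matches the paper's approach: the paper itself gives no explicit proof, simply noting that ``a similar argument'' to Lemma~\ref{lem: germTTI} applies, and what you have written is precisely that similar argument spelled out. Your tangency observation---that any line germ through $f$ contained in $(Or(f),f)$ already lies in $T_fOr(f)=\bb{P}(J_{f,d})$---is the one extra remark needed to pass from the lemma to the corollary, and it is correct.
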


By Theorem \ref{thm: TTI}, we get

\begin{cor}
Let $n\geq 1$ and $d\geq 4$. Then for a general $f\in\bb{P}(S_{n,d})$, $(T_fOr(f)\cap Or(f), f)$ does not contain a linear subspace of positive dimension.
\end{cor}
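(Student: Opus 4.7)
The plan is to deduce this directly from Theorem \ref{thm: TTI} together with the preceding Corollary (which characterizes total tangential instability of $f$ in terms of the germ $(T_fOr(f)\cap Or(f),f)$ containing no line). The logical chain is short, so the main thing to verify is the straightforward implication from ``no line in the germ'' to ``no positive-dimensional linear subspace in the germ''.

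First, I would invoke Theorem \ref{thm: TTI} to know that for $n\geq 1$ and $d\geq 4$ a general $f\in\bb{P}(S_{n,d})$ is totally tangentially unstable. Then I would apply the preceding Corollary (the one just stated) to translate this into the statement that the germ $(T_fOr(f)\cap Or(f), f)$ contains no germ of a line through $f$.

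Next, I would argue that any germ of a positive-dimensional linear subspace $(E,f)\subseteq(\bb{P}(S_{n,d}),f)$ necessarily contains the germ of a line through $f$: since $f\in E$ and $\dim E\geq 1$, one can pick any point $g\in E$ with $g\neq f$ and form the line $\{f+t(g-f):t\in\bb{C}\}\subseteq E$, whose germ at $f$ is a germ of a line. Consequently, if $(T_fOr(f)\cap Or(f),f)$ contained a germ of a positive-dimensional linear subspace, it would already contain a germ of a line, contradicting total tangential instability of $f$. This yields the corollary.

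The argument is essentially a one-liner built on the preceding characterization; there is no real obstacle, only the tiny observation about lines sitting inside positive-dimensional linear subspaces. I would write it as a compact three- or four-sentence proof, citing Theorem \ref{thm: TTI} and the preceding Corollary.
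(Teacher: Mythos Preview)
Your proposal is correct and matches the paper's approach: the corollary is stated immediately after the preceding characterization with only the phrase ``By Theorem \ref{thm: TTI}, we get'' as justification, so your chain (Theorem \ref{thm: TTI} $\Rightarrow$ general $f$ is totally tangentially unstable $\Rightarrow$ no line germ $\Rightarrow$ no positive-dimensional linear subspace germ) is exactly what is intended, just spelled out in slightly more detail than the paper bothers to give.
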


\begin{rk}
We also have a global version of the above result: if $n\geq1$ and $d\geq 4$, then for a general $f\in\bb{P}(S_{n,d})$, the closure of the orbit $Or(f)$ does not contain any projective line.
\end{rk}

\subsection{Local variation of tangent spaces to orbits}

Now let $n\geq 2, d\geq4, N=(n+1)^2-1$ and $\mathcal{O}\subseteq\bb{P}(S_{n,d})$ be the set of smooth polynomials which are not of Sebastiani-Thom type. As is shown in \cite{ZW}, the map
$$
\varphi:\mathcal{O}\longrightarrow \text{Grass}(N,\bb{P}(S_{n,d}))
$$
$$
f\mapsto T_fOr(f)=\bb{P}(J_{f,d})
$$
is injective.

Applying the totally tangential instability, we can now give a more precise description of the map $\varphi$, essentially showing that the Jacobian ideal varies severely even locally.

\begin{prop}\label{prop: variation}
	Let $n\geq 3, d\geq 4$ except $(n,d)=(3,4)$, then for any $f\in\mathcal{O}$ and any open neighborhood $\mathcal{N}_f\subseteq\mathcal{O}$ of $f$ in the strong topology,
	$$
	\bigcap_{g\in\mathcal{N}_f}\varphi(g)=\emptyset.
	$$
In particular, there exist $N+2=(n+1)^2+1$ elements $g_1,g_2,\cdots, g_{N+2}$ in $\mathcal{N}_f$ such that $\bigcap_{n=1}^{N+2}\varphi(g_i)=\emptyset$.
\end{prop}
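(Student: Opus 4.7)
The plan is to argue by contradiction. I will assume $\bigcap_{g\in\mathcal{N}_f}\varphi(g)\neq\emptyset$, choose a representative $\tilde h\in S_{n,d}$ of some point $h$ in this intersection, and fix representatives $\tilde g\in S_{n,d}$ of points $g\in\mathcal{N}_f$. The hypothesis then reads $\tilde h\in J_{\tilde g,d}=T_{\tilde g}(G\cdot\tilde g)$ for every such $\tilde g$. My strategy is to promote this pointwise tangency into the stronger statement $\tilde h\in\mathcal T_{\tilde g}$ for every $\tilde g$, and then extract a contradiction from Theorem~\ref{thm: TTI}.

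The key technical step will be a foliation argument. For every $g\in\mathcal O$, the stabiliser of $\tilde g$ in $G=GL(n+1,\bb{C})$ is finite (the rigidity which makes $\varphi$ well-defined; this is also where the Matsumura--Monsky exceptional case $(n,d)=(3,4)$ enters and has to be excluded), so orbits in the affine cone $\widetilde{\mathcal O}$ over $\mathcal O$ all have dimension $(n+1)^2=N+1$. They therefore assemble into a holomorphic foliation $\mathcal F$ of $\widetilde{\mathcal O}$ with tangent distribution $\tilde g\mapsto J_{\tilde g,d}$. The hypothesis that $\tilde h\in J_{\tilde g,d}$ for every $\tilde g$ in the preimage $\widetilde{\mathcal N_f}$ of $\mathcal{N}_f$ says precisely that the constant vector field $X(\tilde g)\equiv\tilde h$ on $\widetilde{\mathcal N_f}$ is tangent to $\mathcal F$ everywhere. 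The standard fact that a vector field tangent to a foliation integrates inside its leaves then gives $\tilde g+s\tilde h\in G\cdot\tilde g$ for every $\tilde g\in\widetilde{\mathcal N_f}$ and every small $s$, which is exactly the condition $\tilde h\in\mathcal T_{\tilde g}$ from~\eqref{eq: mathcalTf}.

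Combining Theorem~\ref{thm: TTI} with Proposition~\ref{prop: TTIopenness}, the set $\bb{P}(\mathcal U_{n,d})$ is a nonempty Zariski open subset of $\bb{P}(S_{n,d})$ (we have $d\geq 4$), hence Zariski dense, and in particular meets $\mathcal{N}_f$ in an infinite set. I will pick two distinct totally tangentially unstable points $g_1\neq g_2$ of $\mathcal{N}_f\cap\bb{P}(\mathcal U_{n,d})$. Total tangential instability gives $\mathcal T_{\tilde g_i}=\bb{C}\tilde g_i$, so the relation $\tilde h\in\mathcal T_{\tilde g_i}$ established above forces $h=g_i$ in $\bb{P}(S_{n,d})$ for $i=1,2$; this yields $g_1=h=g_2$, contradicting $g_1\neq g_2$ and establishing $\bigcap_{g\in\mathcal{N}_f}\varphi(g)=\emptyset$.

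For the ``in particular'' clause, I will set $L_k=\bigcap_{i=1}^k\varphi(g_i)$ and construct the $g_i$'s inductively. Start with any $g_1\in\mathcal{N}_f$, so $\dim L_1=N$. At each subsequent stage, if $L_k\neq\emptyset$ then the emptiness of $\bigcap_{g\in\mathcal{N}_f}\varphi(g)$ proved above forces the existence of some $g_{k+1}\in\mathcal{N}_f$ with $L_k\not\subseteq\varphi(g_{k+1})$ (otherwise every point of $L_k$ would lie in all $\varphi(g)$'s); then $L_{k+1}$ is a proper linear subspace of $L_k$ and $\dim L_{k+1}\leq\dim L_k-1$. After $N+1$ such reductions, $\dim L_{N+2}\leq-1$, i.e.\ $L_{N+2}=\emptyset$, as desired. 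The main obstacle throughout the argument is the foliation step, whose correctness rests on the constancy of orbit and stabiliser dimensions on $\mathcal O$ and on the standard foliation integrability lemma; the rest of the plan is then largely formal.
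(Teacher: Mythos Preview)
Your proof is correct, and the overall architecture matches the paper's: reduce to showing $\tilde h\in\mathcal T_{\tilde g}$ for (enough) $g\in\mathcal N_f$, then use genericity of total tangential instability to force $h=g_1=g_2$ for two distinct $g_i$'s; the ``in particular'' part is done by the same dimension-drop induction.

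The genuine difference is in the promotion step $\tilde h\in J_{\tilde g,d}\ \Rightarrow\ \tilde h\in\mathcal T_{\tilde g}$. The paper argues analytically: since $h\in J_{g+th,d}$ for all small $t$, the family $\{H_{g+th}\}$ has vanishing Kodaira--Spencer class at every $t$, so by Kodaira's theorem the family is locally trivial as a deformation of compact complex manifolds; then one invokes Matsumura--Monsky to upgrade ``abstractly isomorphic'' to ``projectively equivalent'', which is exactly where the exclusion of $(n,d)=(3,4)$ is needed. Your argument is more elementary and purely group-theoretic: the $G$-orbits on $\widetilde{\mathcal O}$ have constant dimension $(n+1)^2$ (equivalently, the $(n+1)^2$ vectors $x_i\partial_j\tilde g$ are independent for smooth $\tilde g$ of degree $\geq 3$), so they are the leaves of the involutive holomorphic distribution $\tilde g\mapsto J_{\tilde g,d}$, and the constant field $\tilde h$ being tangent to that distribution forces its integral lines $s\mapsto\tilde g+s\tilde h$ to stay in the leaf $G\cdot\tilde g$. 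This bypasses both Kodaira's local triviality theorem and the ``isomorphic $\Rightarrow$ projectively equivalent'' step. One small remark: your parenthetical attributing the exclusion of $(3,4)$ to the finiteness of stabilisers is off --- finiteness of the linear automorphism group (equivalently, linear independence of the $x_i\partial_j\tilde g$) holds for every smooth $\tilde g$ with $d\geq 3$, with no exceptional case. In fact your argument never actually uses the exclusion of $(3,4)$; only the paper's route through Matsumura--Monsky does.
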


\begin{proof}
For the first assertion, it is equivalent to show $\bigcap_{g\in\mathcal{N}_f}J_{g,d}=\{0\}$.

Indeed, let $h$ belong to the intersection, then for any \emph{general} $g\in\mathcal{N}_f$, we have that $h\in J_{g+th,d}$ for any small $t$.

We claim that $h\in\mathcal{T}_g$. Indeed, $h\in J_{g+th}$ implies that $\{H_{g+th}\}$ gives a deformation of $H_g$ that is infinitesimally trivial at any $t$. It follows that this family is locally trivial by the theorem in \cite{KK}, p.199. Namely, for $t$ small, $H_{g+th}$ is isomorphic to $H_g$. By the theorem in \cite{MM}, we obtain that $g+th\in G\cdot g$ for $t$ small. The claim follows by the definition of $\mathcal{T}_g$.

Further, $g$ is totally tangentially unstable by Theorem \ref{thm: TTI}, so $\mathcal{T}_g=\bb{C} g$. It follows that for any $g_1\neq g_2$ in $\mathcal{N}_f$ generically chosen,
$$
h\in\mathcal{T}_{g_1}\cap\mathcal{T}_{g_2}=\bb{C} g_1\cap\bb{C} g_2=\{0\}.
$$

For the last statement, observe that $\varphi(g)=\bb{P}(J_{g,d})$ is a linear subspace in $\bb{P}(S_{n,d})$, hence so is any intersection of finitely many $\varphi(g)$'s.

First choose any $g_1\in\mathcal{N}_f$, then $\dim\varphi(g_1)\leq N=(n+1)^2-1$. Since $\bigcap_{g\in\mathcal{N}_f}\varphi(g)=\emptyset$, we can choose $g_2\in\mathcal{N}_f$ such that $\dim\varphi(g_1)\cap\varphi(g_2)\leq N-1$. Inductively, one can choose $g_3,\cdots, g_{N+2}$ such that
$$
\dim\biggl(\varphi(g_1)\cap\varphi(g_2)\cap\cdots\cap\varphi(g_i)\biggr)\leq N-i+1,
$$
In particular, $\varphi(g_1)\cap\varphi(g_2)\cap\cdots\cap\varphi(g_{N+2})=\emptyset$.
\end{proof}

\subsection{General divisors and isotrivial linear systems}

A family is a flat morphism $f: X\to B$ of complex varieties with connected fibers. The family is called {\bf isotrivial} if there exists a Zariski open dense subset $U\subseteq B$ such that $f^{-1}(x)$ and $f^{-1}(y)$ are isomorphic for any $(x,y)\in U\times U$.

Let $n\geq 2$ and $d\geq 1$. Let $\mathscr{L}$ be a linear system of divisors of degree $d$ on $\bb{P}^n$, then $\mathscr{L}$ can be naturally seen as a family. We call $\mathscr{L}$ isotrivial if the associated family is isotrivial.

Now Let $n\geq 3$ and $d\geq 4$ with $(n,d)=(3,4)$ excluded. Given a pencil of degree $d$ divisor on $\bb{P}^n$, say $\{H_{\lambda f+\mu h}\}$ with $f,h\in S_{n,d}$. If this pencil is isotrivial, then we may assume that a general element is isomorphic to $H_f$. If $f$ is generically chosen, then as in the proof of Proposition \ref{prop: variation}, we get that $h\in\mathcal{T}_f=\bb{C}f$. Hence the pencil $\{H_{\lambda f+\mu h}\}$ degenerates to a single divisor $\{H_f\}$. In fact, we have proved the following.

\begin{prop}
Let $n\geq 3, d\geq 4$ with $(n,d)=(3,4)$ excluded. Then a general divisor of degree $d$ on $\bb{P}^n$ does not belong to an isotrivial linear system of positive dimension.
\end{prop}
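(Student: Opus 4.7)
The plan is to reduce the statement to a claim about pencils and then invoke Theorem \ref{thm: TTI}. Given a positive-dimensional isotrivial linear system $\mathscr{L}$ containing a general divisor $H_f$, I would choose any $h \in S_{n,d}$ with $h \notin \bb{C}f$ such that the pencil $\mathcal{P}_{f,h}$ lies in $\mathscr{L}$; restricting an isotrivial family to a pencil keeps it isotrivial. It therefore suffices to show that, for a general $f$, no isotrivial pencil $\mathcal{P}_{f,h}$ with $h \notin \bb{C}f$ can exist.

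Suppose such a pencil $\mathcal{P}_{f,h}$ is isotrivial. By definition there is a Zariski dense open $V \subseteq \bb{P}^1$ such that all fibers $H_{\lambda f + \mu h}$ with $(\lambda:\mu) \in V$ are mutually isomorphic as projective schemes. After a small reparametrization of the pencil (which does not disturb the genericity of $f$) I may assume $(1:0) \in V$, so that $H_f \cong H_{f+th}$ for every $t$ in a Zariski dense open subset of $\bb{C}$. Because $n \geq 3$, $d \geq 4$ and $(n,d) \neq (3,4)$, the theorem in \cite{MM} guarantees $\text{Aut}(H_f) = \{\text{Id}\}$ for general $f$, so each scheme isomorphism $H_f \cong H_{f+th}$ is induced by a projective linear transformation, giving $f + th \in G \cdot f$ for such $t$.

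The next step is to promote this dense-generic statement to $h \in \mathcal{T}_f$. The Zariski closure $\ov{G \cdot f}$ contains $f + th$ on a Zariski dense subset of $\bb{C}$ and therefore for every $t \in \bb{C}$. On the other hand, $G \cdot f$ is Zariski open in $\ov{G \cdot f}$ (an orbit is smooth and open in its closure), and $f = f + 0\cdot h$ lies in $G \cdot f$. By continuity of the map $t \mapsto f + th$, there is a strong neighborhood of $0 \in \bb{C}$ on which $f + th \in G \cdot f$, which by definition means $h \in \mathcal{T}_f$.

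Finally, Theorem \ref{thm: TTI} asserts that a general $f$ is totally tangentially unstable, so $\mathcal{T}_f = \bb{C} f$; hence $h \in \bb{C} f$, contradicting the choice of $h$ and finishing the argument. The only delicate ingredient I expect is the third step, where one must pass from the Zariski dense set of parameters $t$ on which $f + th$ is projectively equivalent to $f$ to a strong neighborhood of $t = 0$ where the same holds; this is essentially a standard orbit-closure observation, but it is the one place the proof is not purely a formal combination of Theorem \ref{thm: TTI}, the reduction to pencils and the automorphism-triviality result of \cite{MM}.
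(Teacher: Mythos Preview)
Your approach is essentially the paper's: reduce to a pencil, use \cite{MM} to convert abstract isomorphism into projective equivalence, and then apply Theorem~\ref{thm: TTI}. Where the paper simply writes ``as in the proof of Proposition~\ref{prop: variation}, we get that $h\in\mathcal{T}_f$'', you spell out the passage from ``$f+th\in G\cdot f$ for Zariski-dense $t$'' to ``$h\in\mathcal{T}_f$'' via the orbit-closure observation; this is in fact cleaner in the present setting than the Kodaira local-triviality argument actually used in that proposition.

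The one place to tighten is the sentence about reparametrizing the pencil so that $(1:0)\in V$ ``without disturbing the genericity of $f$''. A M\"obius change of parameter on $\bb{P}^1$ does not move which point corresponds to $H_f$, and replacing $f$ by another fiber $\lambda_0 f+\mu_0 h$ with $(\lambda_0:\mu_0)\in V$ does not obviously keep it general enough to invoke \cite{MM} and Theorem~\ref{thm: TTI}. The clean fix avoids reparametrization altogether: the fibers over $V$ all lie in a single orbit $G\cdot g$, hence the whole line $t\mapsto f+th$ lands in $\overline{G\cdot g}$; since a general $f$ has orbit of the maximal dimension $(n+1)^2$, it cannot sit in the boundary $\overline{G\cdot g}\setminus G\cdot g$, so $G\cdot f=G\cdot g$ and you may take $g=f$ from the start. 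This is exactly the orbit-closure idea you already deploy in the following paragraph. (The paper makes the same implicit move when it writes ``we may assume that a general element is isomorphic to $H_f$''.)
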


\section{Tangential smoothability for singular polynomials}

 Let $n\geq 2, d\geq2$ and $\mathcal{S}\subseteq\bb{P}(S_{n,d})$ be the set of singular polynomials. Then $\mathcal{S}$ is an irreducible hypersurface, see for instance \cite{VO2}, Section 2.1. Write a general $f\in\bb{P}(S_{n,d})$ as
$$
f=\sum\limits_{|\alpha|=d} c_\alpha x^\alpha,
$$
then $(c_\alpha: |\alpha|=d)$ can be regarded as the homogeneous coordinates of $\bb{P}(S_{n,d})$. Denote by $P_{s}(f):=P_s(c_\alpha)=0$ the defining equation of $\mathcal{S}$ in $\bb{P}(S_{n,d})$.

Now given $f\in S_{n,d}$,
$$
f=\sum_{|\alpha|=d}c_\alpha x^\alpha,\qquad c_\alpha\in\bb{C}
$$
and $h\in J_{f,d}$
$$
h=\sum_{\beta,\gamma=0}^n a_{\beta,\gamma}x_\beta \frac{\p f}{\p x_\gamma},
$$
$P_s(f+h)$ is a polynomial in $c_\alpha,|\alpha|=d$ and $a_{\beta,\gamma},\beta,\gamma=0,\cdots, n$. In particular, for fixed $c_\alpha$ or equivalently $f\in S_{n,d}$, $P_s(f+h)$ is a polynomial in $a_{\beta,\gamma},\beta,\gamma=0,\cdots, n$. We shall also use the notation $P_s(c_\alpha,a_{\beta,\gamma}):=P_s(f,a_{\beta,\gamma}):=P_s(f+h)$.

Consider $J_{f,d}$ as a vector subspace of $\bb{C}^{(n+1)^2}$ with affine coordinates $(a_{\beta,\gamma}:\beta,\gamma=0,\cdots,n)$. Then, $f\in\mathcal{S}$ is tangentially smoothable (see also Definition \ref{def: TS}) if and only if restricted to $J_{f,d}$, $P_s(f,a_{\beta,\gamma})\neq 0$ as a polynomial in $a_{\beta,\gamma}$'s. Observe that $J_{f,d}$ may not have dimension $(n+1)^2$ since $\{x_\beta \frac{\p f}{\p x_\gamma}:\beta,\gamma=0,\cdots, n\}$ can be linearly dependent over $\bb{C}$.

Immediately, our discussion above gives the following.

\begin{lem}
	Given $f\in\mathcal{S}$ tangentially smoothable, there exists a nonempty Zariski open $JU_f\subseteq J_{f,d}$ such that for any $h\in JU_f$, $f+h$ is smooth.
\end{lem}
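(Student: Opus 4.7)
The plan is to interpret the lemma as a direct consequence of the polynomial nature of the discriminant $P_s$ combined with the hypothesis of tangential smoothability. Smoothness of $f+h$ is detected by the single polynomial condition $P_s(f+h)\neq 0$, so one expects the smoothable locus inside $J_{f,d}$ to be the complement of a hypersurface, which is automatically Zariski open; the only thing to verify is that this hypersurface is proper, and this is precisely what the assumption supplies.

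More concretely, I would proceed as follows. First, parametrize $J_{f,d}$ using the coordinates $(a_{\beta,\gamma})_{\beta,\gamma=0,\ldots,n}$ as in the paragraph preceding the lemma, writing
\[
h = \sum_{\beta,\gamma=0}^n a_{\beta,\gamma}\, x_\beta\frac{\partial f}{\partial x_\gamma}.
\]
With $f$ fixed, the expression $P_s(f,a_{\beta,\gamma})=P_s(f+h)$ is a polynomial in the $a_{\beta,\gamma}$'s. Hence $P_s$ restricts to a regular function $\Phi_f$ on the affine space $J_{f,d}$ (viewed with its induced algebraic structure as a vector subspace of $\mathbb{C}^{(n+1)^2}$ parametrized by the $a_{\beta,\gamma}$).

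Next, apply the tangential smoothability hypothesis: by Definition \ref{def: TS}, there exists some $h_0\in J_{f,d}$ for which $f+h_0$ is smooth, i.e., $\Phi_f(h_0)=P_s(f+h_0)\neq 0$. Thus $\Phi_f$ is a nonzero regular function on $J_{f,d}$. Define
\[
JU_f := \{\,h\in J_{f,d}\ :\ \Phi_f(h)\neq 0\,\} = \{\,h\in J_{f,d}\ :\ f+h\text{ is smooth}\,\}.
\]
This is the complement of a proper Zariski closed subset (the zero locus of $\Phi_f$) in $J_{f,d}$, hence $JU_f$ is a nonempty Zariski open subset of $J_{f,d}$, and by construction every $h\in JU_f$ satisfies that $f+h$ is smooth.

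There is really no substantial obstacle here; the only mild subtlety worth addressing in the write-up is making explicit that $J_{f,d}$ carries a natural structure of affine variety (a linear subspace of $\mathbb{C}^{(n+1)^2}$, even though the parametrization by the $a_{\beta,\gamma}$'s need not be injective when the $x_\beta\,\partial f/\partial x_\gamma$ are linearly dependent), so that the word \emph{Zariski open} is unambiguous. Once this is clarified, the lemma follows at once from the observation that the smoothable locus is cut out by the nonvanishing of the single polynomial $\Phi_f$, and that $\Phi_f$ is not identically zero by hypothesis.
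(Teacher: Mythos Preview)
Your proof is correct and follows essentially the same approach as the paper: the lemma is presented there as an immediate consequence of the discussion preceding it, namely that $f$ is tangentially smoothable if and only if $P_s(f,a_{\beta,\gamma})$ is not identically zero as a polynomial in the $a_{\beta,\gamma}$'s, so its nonvanishing locus is the desired nonempty Zariski open set. Your write-up simply makes this reasoning explicit.
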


The following Proposition shows in particular that not all $f\in\mathcal{S}$ are tangentially smoothable.

\begin{prop}\label{prop: nonexistence}
	If $f\in S_{n,d}$ and $H_f\subseteq\bb{P}^n$ admits a singularity $p$ of multiplicity $\geq 3$, then $f$ is not tangentially smoothable, i.e., there exists no $h\in J_{f,d}$ so that $f+h$ is smooth.
\end{prop}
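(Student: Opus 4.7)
The plan is to show that a singular point $p$ of $H_f$ of multiplicity $\geq 3$ necessarily remains a singular point of $H_{f+h}$ for every $h\in J_{f,d}$; the whole argument is a direct local computation at $p$.

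First I would reduce to an affine calculation. After a projective linear change of coordinates, take $p=[1:0:\cdots:0]$ and pass to the chart $x_0\neq 0$. The hypothesis translates to the statement that the dehomogenization $F(y_1,\ldots,y_n):=f(1,y_1,\ldots,y_n)$ lies in $\mathfrak{m}_0^3$, where $\mathfrak{m}_0$ denotes the maximal ideal at the origin of $\bb{C}^n$; equivalently, $F$ together with all of its first and second partial derivatives in the $y_k$ vanish at $0$.

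The key step is to check that each natural generator $x_i\,\partial f/\partial x_j$ of $J_{f,d}$ dehomogenizes into $\mathfrak{m}_0^2$. For $j\geq 1$ this is immediate because $(\partial f/\partial x_j)(1,y)=\partial F/\partial y_j\in\mathfrak{m}_0^2$. The case $j=0$ is the one subtle point: I would invoke Euler's identity $d\,f=\sum_{k=0}^n x_k\,\partial f/\partial x_k$, which upon setting $x_0=1$ gives $(\partial f/\partial x_0)(1,y)=dF(y)-\sum_{k\geq 1}y_k\,\partial F/\partial y_k(y)\in\mathfrak{m}_0^2$ as well. Multiplying by $y_i$ (or by $1$ when $i=0$) preserves membership in $\mathfrak{m}_0^2$, so every generator dehomogenizes into $\mathfrak{m}_0^2$.

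Since $J_{f,d}$ is the $\bb{C}$-linear span of these generators and $\mathfrak{m}_0^2$ is a linear subspace, any $h\in J_{f,d}$ dehomogenizes to some $H\in\mathfrak{m}_0^2$. Combined with $F\in\mathfrak{m}_0^3\subseteq\mathfrak{m}_0^2$, this yields $F+H\in\mathfrak{m}_0^2$, meaning $p$ is still a singular point of $H_{f+h}$ and hence $f+h$ is not smooth. There is no serious obstacle; the only bit of care required is the Euler-identity manoeuvre for $j=0$, and the arithmetic check that the one order lost when differentiating still leaves us safely inside $\mathfrak{m}_0^2$, exactly what singularity at $p$ demands.
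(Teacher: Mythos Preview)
Your proof is correct and follows essentially the same idea as the paper's: show that $p$ remains singular on $H_{f+h}$ by checking that every generator $x_i\,\partial f/\partial x_j$ of $J_{f,d}$ vanishes to order $\geq 2$ at $p$. The paper works homogeneously---placing $p=(0:\cdots:0:1)$ and using the ideal $(x_0,\ldots,x_{n-1})^k$---which lets all partial derivatives be treated uniformly and avoids your Euler-identity case split for $j=0$, but the content is identical.
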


\begin{proof}
	Without loss of generality we may assume $p=(0:\cdots: 0:1)$. Then $f$ has the following form
	$$
	f(x_0,\cdots,x_n)=v_3(x_0,\cdots,x_{n-1})x_n^{d-3}+\cdots+v_d(x_0,\cdots,x_{n-1}),
	$$
	where $v_j\in\bb{C}[x_0,\cdots, x_{n-1}]_j, j=3,\cdots, n$.
	A fortiori, $f\in (x_0,\cdots, x_{n-1})^3$ and so
$$
\frac{\p f}{\p x_i}\in (x_0,\cdots, x_{n-1})^2,\qquad i=0,\cdots, n.
$$
 Therefore for any $h\in J_{f,d}$, $h\in (x_0,\cdots, x_{n-1})^2$ and thus $f+h\in (x_0,\cdots, x_{n-1})^2$, which implies that $f+h$ has $p=(0:\cdots: 0:1)$ as a singular point.\qedhere
\end{proof}

\subsection{Proof of Theorem \ref{thm: TS} }

We are now to prove the genericity of tangentially smoothability.

The ``only if" part follows from Proposition \ref{prop: nonexistence}, so we focus on the other part.

Suppose $f$ is given such that $H_f$ has only isolated singularities and every singular point has multiplicity 2. Let $p_1,\cdots, p_m$ be all the singular points of $H_f$. We have

\begin{claim}\label{claim: genericTS}
{\rm (i)} For a general element $h\in J_{f,d}$, $H_h$ is smooth away from $p_1,\cdots,p_m$;

{\rm (ii)} For any $j\in\{1,\cdots, m\}$, there exists an $h_j\in J_{f,d}$ such that $V(h_j)$ is smooth at $p_j$.
\end{claim}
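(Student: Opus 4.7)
The plan is to treat the two parts by different methods: part (i) will follow from Bertini's theorem applied to the linear system $J_{f,d}$, while part (ii) requires an explicit construction of a specific $h_j$ exploiting the multiplicity-$2$ hypothesis at $p_j$.

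For (i), the first step is to identify the base locus of the linear system $J_{f,d}\subseteq S_{n,d}$ inside $\mathbb{P}^n$. The inclusion $\{p_1,\dots,p_m\}\subseteq\mathrm{Bs}(J_{f,d})$ is immediate, since every generator $x_\beta\,\partial f/\partial x_\gamma$ vanishes at any singular point of $H_f$. Conversely, for any $q\notin\{p_1,\dots,p_m\}$: either $q\notin H_f$, in which case Euler's identity $d\cdot f=\sum_\beta x_\beta\,\partial f/\partial x_\beta$ already provides an element of $J_{f,d}$ nonvanishing at $q$; or $q$ is a smooth point of $H_f$, in which case some $\partial f/\partial x_\gamma(q)\neq 0$ and a suitable $x_\beta\,\partial f/\partial x_\gamma$ gives a nonvanishing element of $J_{f,d}$. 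Hence $\mathrm{Bs}(J_{f,d})=\{p_1,\dots,p_m\}$, and the classical Bertini theorem in characteristic $0$ produces a Zariski open dense subset of $h\in J_{f,d}$ whose divisor is smooth outside this finite base locus.

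For (ii), I would fix $j$ and change coordinates so that $p_j=(0:\cdots:0:1)$. The multiplicity-$2$ assumption at $p_j$ lets me write
\[
f=x_n^{d-2}g_2(x_0,\dots,x_{n-1})+x_n^{d-3}g_3(x_0,\dots,x_{n-1})+\cdots+g_d(x_0,\dots,x_{n-1}),
\]
with each $g_k$ homogeneous of degree $k$ in $x_0,\dots,x_{n-1}$ and $g_2\neq 0$ a genuine quadratic form. Euler's identity applied to $g_2$ forces some $\gamma<n$ with $\partial g_2/\partial x_\gamma\neq 0$, and the right candidate is $h_j=x_n\,\partial f/\partial x_\gamma\in J_{f,d}$. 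Dehomogenizing in the affine chart $x_n=1$ with local coordinates $y_i=x_i/x_n$, the polynomial $h_j$ restricts to $\sum_{k\ge 2}\partial g_k/\partial y_\gamma$, whose lowest-order term $\partial g_2/\partial y_\gamma$ is a nonzero linear form in the $y_i$. Thus $V(h_j)$ has nonzero differential at $p_j$ and is smooth there.

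I do not expect a serious obstacle. The only subtle point in (i) is pinning down the base locus exactly, which is precisely where Euler's identity enters. The one thing to notice in (ii) is why the factor $x_n$ in $h_j$ is essential: for $\beta<n$, every term $x_\beta\,\partial f/\partial x_\gamma$ already vanishes to order $\ge 2$ at $p_j$ in the affine chart, so without the factor $x_n$ one could not produce a nonzero linear term. Multiplication by $x_n$ (which restricts to $1$ in the chart) is exactly what transfers the first-order information of $\partial f/\partial x_\gamma$ down to $p_j$.
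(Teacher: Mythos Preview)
Your proof is correct and follows essentially the same route as the paper: Bertini on the linear system $J_{f,d}$ with base locus $\{p_1,\dots,p_m\}$ for (i), and the element $x_n\,\partial f/\partial x_\gamma$ (the paper adds $f$ to it, which is immaterial since $f$ has order $\ge 2$ at $p_j$) for (ii). Your treatment is in fact slightly more careful than the paper's, since you do not tacitly assume $g_2$ can be diagonalised to $x_0^2+\cdots+x_{n-1}^2$ but only use that some $\partial g_2/\partial x_\gamma$ is nonzero.
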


Assuming the claim, by (i), there exists a nonempty Zariski open subset, say $JU_{f,0}\subseteq J_{f,d}$ such that for any $h\in JU_{f,0}$, $H_h$ is smooth away from $p_1,\cdots, p_m$. Moreover, for any $j\in\{1,\cdots, m\}$, define
$$
JU_{f,j}=\{h\in J_{f,d}\quad:\quad H_h\text{ is smooth at } p_j \}.
$$
Since any $h\in J_{f,d}$ can be written as $h=\sum\limits_{\beta,\gamma=0}^{n}a_{\beta,\gamma}x_\beta \frac{\p f}{\p x_\gamma}$, $H_h$ has $p_j$ as a singularity if and only if $\nabla h(p_j)=0$ which is also equivalent to a system of polynomial equations in $a_{\beta,\gamma}$'s. Therefore, $JU_{f,j}$ is a Zariski open subset of $J_{f,d}$, and it is nonempty by (ii).

Let
$$
JU_f=JU_{f,0}\cap JU_{f,1}\cap \cdots\cap JU_{f,m},
$$
then it is a nonempty Zariski open subset of $J_{f,d}$. Moreover, for any $h\in JU_f$, we have
\begin{enumerate}
\item $H_h$ is smooth away from $p_1,\cdots,p_m$, since $h\in JU_{f,0}$;

\item For any $j$,  $H_h$ is smooth at $p_j$, since $h\in JU_{f,j}$.
\end{enumerate}
So $h$ is smooth. Obviously $h=f+(h-f)$ and $h-f\in J_{f,d}$, therefore $f$ is tangentially smoothable by definition.

{\it Proof of Claim \ref{claim: genericTS}: }
For (i),  we may see the vector space $J_{f,d}$ as a linear system over $\bb{P}^n$, then clearly the base point set of this linear system equals $\{p_1,\cdots, p_m\}$, then (i) follows from Bertini theorem.

For (ii), we may assume that $p_j=(0:\cdots:0:1)$ and thus $f$ has the following form
$$
f=(x_0^2+\cdots+x_{n-1}^2)x_{n}^{d-2}+v_3(x_0,\cdots,x_{n-1})x_n^{d-3}+\cdots+v_d(x_0,\cdots,x_{n-1}),
$$
for $k\geq 0$ where $v_j\in\bb{C}[x_0,\cdots, x_{n-1}]_j, j=3,\cdots, n$. Then we have
$$
J_{f,d}\ni x_n\frac{\p f}{\p x_0}=2x_0x_n^{d-1}+\frac{\partial v_3}{\partial x_0}x_n^{d-2}+\cdots+\frac{\partial v_d}{\partial x_0}x_n.
$$
Set $h_j=f+x_n\frac{\p f}{\p x_0}\in J_{f,d}$, then $h_j$ does not have $p_j=(0:\cdots:0:1)$ as a singularity, so (ii) holds.\qed

\end{document}